\date{}
\theoremstyle{plain}
      \newtheorem{theorem}{Theorem}[section]
      \newtheorem{lemma}[theorem]{Lemma}
            \newtheorem{claim}[theorem]{Claim}
      \newtheorem{conjecture}[theorem]{Conjecture}
\theoremstyle{definition}
      \newtheorem{definition}[theorem]{Definition}
\theoremstyle{remark}
\def\twr{\mbox{\rm twr}}
\title{Constructions in Ramsey theory}
\author{Dhruv Mubayi\thanks{Department of Mathematics, Statistics, and Computer Science, University of Illinois, Chicago, IL, 60607 USA.  Research partially supported by NSF grant DMS-1300138. Email: {\tt mubayi@uic.edu}} \and Andrew Suk\thanks{Department of Mathematics,  University of California at San Diego, La Jolla, CA, 92093 USA. Supported by NSF grant DMS-1500153, an NSF CAREER award, and an Alfred Sloan Fellowship. Email: {\tt asuk@ucsd.edu}\newline
MSC (2010): 05C15, 05C55, 05C65, 05D10, 05D40}}
\begin{document}

\maketitle

\begin{abstract}
We provide several constructions for problems in Ramsey theory. First, we prove a superexponential lower bound for the classical 4-uniform Ramsey number $r_4(5,n)$, and  the same for the iterated $(k-4)$-fold logarithm of the $k$-uniform version $r_k(k+1,n)$. This is the first improvement of the original exponential lower bound for $r_4(5,n)$ implicit in work of Erd\H os and Hajnal from 1972 and also improves the current best known bounds for larger $k$ due to the authors. Second, we prove an upper bound for the hypergraph Erd\H os-Rogers function $f^k_{k+1, k+2}(N)$ that is an iterated $(k-13)$-fold logarithm in $N$. This improves the previous upper bounds that were only logarithmic and addresses a question of Dudek and the first author that was reiterated by Conlon, Fox and Sudakov.  Third, we generalize the results of Erd\H os and Hajnal about the 3-uniform Ramsey number of $K_4$ minus an edge versus a clique to $k$-uniform hypergraphs.
\end{abstract}

\section{Introduction}

A $k$-uniform hypergraph $H$ ($k$-graph for short) with vertex set $V$ is a collection of $k$-element subsets of $V$.  We write $K^k_n$ for the complete $k$-uniform hypergraph on an $n$-element vertex set.  Given $k$-graphs $F$, $G$, the \emph{Ramsey number} $r(F, G)$ is the minimum $N$ such that every red/blue coloring of the edges of $K^k_N$ results in a monochromatic red copy of $F$ or a monochromatic blue copy of $G$.

In this paper, we study several problems in hypergraph Ramsey theory.  We describe each problem in detail in its relevant section. Here we provide a brief summary.  In Section \ref{clique}, we give new lower bounds on the classical Ramsey number $r(K^k_{k + 1},K^k_{n})$, improving the previous best known bounds obtained by the authors \cite{MS15}.  In particular, we give the first superexponential lower bound for $r(K^4_{5},K^4_{n})$ since the problem was first explicitly stated by Erd\H os and Hajnal~\cite{EH72} in 1972. In Section \ref{rogers}, we establish a new upper bound for the hypergraph Erd\H os-Rogers function $f^k_{k+1, k+2}(N)$ that is an iterated logarithm function in $N$. More precisely, we construct $k$-graphs on $N$ vertices, with no copy of $K_{k+2}^k$, yet every set of $n$ vertices contains a copy of $K_{k+1}^k$ where $n$ is the $(k-13)$-fold iterated logarithm of $N$.
This addresses questions posed by Dudek and the first author~\cite{DM} as well as by Conlon, Fox, and Sudakov~\cite{CFS14} and
significantly improves the previous best known bound in~\cite{DM} of $n=O((\log N)^{1/(k-1)})$.
In Section \ref{last} we study the Ramsey numbers for $k$-half-graphs versus cliques, generalizing the results of Erd\H os and Hajnal~\cite{EH72} about the 3-uniform Ramsey number of $K_4$ minus an edge versus a clique.  The upper bound is a straightforward extension of the method in~\cite{EH72}, while the constructions are new.

All logarithms are base 2 unless otherwise stated. For the sake of clarity of presentation, we systematically omit floor and ceiling signs whenever they are not crucial.

\section{A new lower bound for $r_k(k + 1,n)$}\label{clique}

 In order to avoid the excessive use of superscripts, we use the simpler notation $r(K^k_s, K^k_n) = r_k(s,n)$.  Estimating the Ramsey number $r_k(s,n)$ is a classical problem in extremal combinatorics and has been extensively studied \cite{EHR,ER,ES35}.  Here we study the \emph{off-diagonal} Ramsey number, that is,  $r_k(s,n)$ with $k,s$ fixed and $n$ tending to infinity.  It is known that for fixed $s \geq k + 1$, $r_2(s,n)$ grows polynomially in $n$ \cite{AKS,B,BK} and $r_3(s,n)$ grows exponentially in a power of $n$ \cite{CFS}.  In 1972, Erd\H os and Hajnal~\cite{EH72} raised the question of determining the correct tower growth rate for $r_k(s ,n)$. We define the \emph{tower function} $\twr_k(x)$ by
 $$\twr_1(x) = x \qquad \hbox{ and } \qquad \twr_{i + 1} = 2^{\twr_i(x)}.$$

By applying the Erd\H os-Hajnal stepping up lemma in the off-diagonal setting (see \cite{graham}), it follows that $r_k(s,n) \geq \twr_{k-1}(\Omega(n))$, for $k\geq 4$ and for all $s \geq 2^{k-1} - k + 3$.   However they conjectured the following.

\begin{conjecture}{\bf (Erd\H os-Hajnal~\cite{EH72})}\label{conj2}
For $s \geq k + 1 \ge 5$ fixed,  $r_k(s,n) \geq\twr_{k-1}(\Omega(n))$.

\end{conjecture}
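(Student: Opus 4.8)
What follows is a plan of attack for Conjecture~\ref{conj2}, which appears to be genuinely open; the proposal isolates where a new idea is required.

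\emph{Reduction to a clique-preserving stepping-up lemma.} Because $\twr_{k-1}(\Omega(n))$ is a tower of height $k-1$ and the exponential bound $r_3(4,n)\ge 2^{\Omega(n)}$ is known, it would suffice to prove, for every $k\ge 3$, the following step: from a red/blue coloring of $\binom{[N]}{k}$ with no red $K^k_{k+1}$ and no blue $K^k_m$, one can build a red/blue coloring of $\binom{[2^N]}{k+1}$ with no red $K^{k+1}_{k+2}$ and no blue $K^{k+1}_{m'}$ for some $m'\le m+O_k(1)$. Iterating this $k-3$ times from the $3$-uniform bound adds one exponential per step, hence reaches tower height $k-1$, and degrades the blue clique number by only a bounded additive amount for fixed $k$; so the entire difficulty is to run a stepping-up step while holding the red clique number at $k+1$.

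\emph{Setting up the step, and the easy side.} I would use the Erd\H os--Hajnal framework: identify $[2^N]$ with $\{0,1\}^N$, and for $u<v$ let $\delta(u,v)$ be the largest coordinate where they differ; for $v_1<\cdots<v_{k+1}$ put $\delta_i=\delta(v_i,v_{i+1})$. The $\delta_i$ are distinct and $(\delta_1,\dots,\delta_k)$ is unimodal. The usual rule colors a $(k+1)$-set by $\chi(\{\delta_1,\dots,\delta_k\})$ when this sequence is monotone and otherwise red/blue by the parity of the position of the local maximum; under it, a large blue $(k+1)$-clique contains a monotone $\delta$-block long enough to pull back to an almost-as-large blue $k$-clique, giving $m'\le m+O(1)$. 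The red side is where the loss occurs.

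\emph{The obstacle, and how I would attack it.} In the standard analysis a red $K^{k+1}$ of size $t$ pulls back only to a red $K^k$ of size roughly $t/2^{k}$ (its vertices fit into the leaves of a depth-$k$ binary split tree), so ``no red $K^k_{k+1}$'' yields only ``no red $K^{k+1}_t$'' for $t\sim 2^{k}$; quantitatively the known tower bound comes from the recursion $s_{k+1}=s_k+2^{k-1}-1$ with $s_3=4$, which is exactly the origin of the hypothesis $s\ge 2^{k-1}-k+3$. To reach $s=k+1$ one needs a stepping-up step whose red-clique cost is $O(1)$ --- ideally $1$ --- per level, while still gaining an exponential and keeping $m'\le m+O(1)$. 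I would try to engineer $\chi'$ to read finer data from the $\delta$-sequence --- the multiset of lengths of maximal monotone runs, or a secondary coloring of the local-maximum coordinate inherited from $\chi$ --- so that any red $K^{k+1}_{k+2}$ is forced to contain a long monotone run (a ``red tight path''), which pulls back to a red clique in $\chi$ whose size grows with the run rather than staying bounded; alternatively, one could feed the induction a base $3$-uniform coloring carrying extra structure (a product or algebraic structure, or a bound on red tight paths or red ``books'') that survives stepping up and rules out such parasitic cliques. A natural first target is the single step $k=3\to k=4$, i.e.\ $r_4(5,n)\ge 2^{2^{\Omega(n)}}$ --- already well beyond the superexponential bound obtained in this paper --- and then to make the construction self-similar so the same step iterates. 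Verifying that no red $K^{k+1}_{k+2}$ survives the refined coloring is, I expect, the technical heart of the conjecture and the most likely point of failure.
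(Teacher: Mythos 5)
The statement you were asked to address is Conjecture~\ref{conj2}, an open conjecture of Erd\H os and Hajnal; the paper does not prove it, and neither do you --- as you say yourself, your text is a plan that ``isolates where a new idea is required,'' not a proof. So the verdict is: the diagnosis is essentially correct, but there is no proof here, and there is no proof in the paper to compare it against.

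It is worth calibrating your reduction against what the paper actually achieves, because your account of where the obstacle sits is slightly off. The clique-preserving stepping-up step you ask for --- hold the red clique number at one above the uniformity while gaining an exponential and losing only a bounded amount on the blue side --- is in fact supplied by the paper for every step except the first: Lemma~\ref{stepup4a} passes from a $(k-1)$-uniform coloring with no red $K^{k-1}_{k}$ and no blue $K^{k-1}_{\lfloor n/6\rfloor}$ to a $k$-uniform coloring on exponentially many vertices with no red $K^{k}_{k+1}$ and no blue $K^k_n$; the blue clique number degrades only by a multiplicative constant, which is just as good as your $m'\le m+O_k(1)$ for tower purposes. So the recursion $s_{k+1}=s_k+2^{k-1}-1$ you cite is not the true barrier at higher uniformities. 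The genuine bottleneck is exactly the $3\to4$ step you name as a ``first target'': in Theorem~\ref{stepup4} the blue clique number degrades exponentially ($n\mapsto\lfloor\log n/2\rfloor$), which is why the paper lands at $\twr_{k-2}(n^{c\log\log n})$ rather than $\twr_{k-1}(\Omega(n))$. Proving $r_4(5,n)\ge 2^{2^{\Omega(n)}}$ with a construction that then feeds into Lemma~\ref{stepup4a} would settle the conjecture for $s=k+1$ (and hence for all $s\ge k+1$). Your proposed refinements --- reading off monotone-run statistics, or endowing the base coloring with extra structure that survives stepping up --- are plausible directions, but none is carried out, and excluding a red $K^4_5$ while keeping the blue clique number within a polynomial of $n$ is precisely where every known variant of the stepping-up lemma fails.
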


\noindent In \cite{CFS13}, Conlon, Fox, and Sudakov modified the Erd\H os-Hajnal stepping-up lemma to show that Conjecture \ref{conj2} holds for all $s\geq \lceil 5k/2\rceil - 3$.   Recently the authors nearly proved the conjecture by establishing the following.

\begin{theorem}[\cite{MS15}] \label{main0}
There is a positive constant $c>0$ such that the following holds.  For $k \geq 4$ and $n>3k$,  we have
\begin{enumerate}
\item $r_k(k + 3,n) \geq \twr_{k-1}(cn),$

\item $r_k(k + 2,n) \geq \twr_{k - 1}(c\log^2 n),$

\item $r_k(k + 1, n) \geq \twr_{k - 2}(cn^2).$

\end{enumerate}

\end{theorem}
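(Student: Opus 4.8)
The plan is to prove all three bounds via one common scheme: first establish a strong \emph{base construction} in low uniformity, then apply an \emph{off-diagonal stepping-up lemma} repeatedly, each application raising the uniformity by one, adding one level to the tower, and increasing the red ``clique parameter'' only additively while keeping the blue clique parameter as large as possible. The three statements differ only in which base construction is available and in how costly the stepping-up is, and these costs are precisely what separate $\twr_{k-1}(cn)$, $\twr_{k-1}(c\log^2 n)$, and $\twr_{k-2}(cn^2)$.

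I would build the stepping-up machinery first, as it is the heart of the argument. Given a red/blue coloring $\chi$ of $\binom{[N]}{k}$ with no red $K^k_s$ and no blue $K^k_t$, identify $[2^N]$ with $\{0,1\}^N$ and, for $u\ne v$, let $\delta(u,v)$ be the largest coordinate on which $u$ and $v$ differ. One colors a $(k+1)$-set $v_1<\cdots<v_{k+1}$ of $[2^N]$ according to the combinatorial shape of the sequence $\delta_i=\delta(v_i,v_{i+1})$: when this sequence is monotone, color by $\chi$ applied to the induced $k$-set of $\delta$-values; when it has a local extremum, color by a rule depending on whether the first extremum is a maximum or a minimum, designed so that a monochromatic $(k+1)$-clique is forced to have an almost-monotone $\delta$-sequence. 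Unwinding the arithmetic of $\delta$-sequences then converts a red $K^{k+1}_{s'}$ back to a red $K^k_s$ in $\chi$, and likewise for blue. The classical Erd\H os--Hajnal rule is wasteful here: it roughly \emph{doubles} the red clique parameter at each step, forcing $s\gtrsim 2^{k-1}$. The crux is to redesign the coloring rule, and to pin down exactly which $\delta$-patterns a monochromatic clique can exhibit, so that the red parameter grows by only an additive constant per step while the blue parameter survives --- with the price for this economy paid in one of two currencies: a weaker blue parameter, or the loss of one tower level.

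Granting such a lemma, part (1) follows by starting from the classical bound $r_3(6,n)\ge 2^{cn}=\twr_2(cn)$ (itself a consequence of $r_3(4,n)\ge 2^{cn}$) and iterating $k-3$ times: the red clique runs $6\to 7\to\cdots\to k+3$, the tower runs $\twr_2\to\twr_3\to\cdots\to\twr_{k-1}$, and the blue parameter degrades only additively in $k$, hence stays $\Omega(n)$ as soon as $n>3k$. Part (2) pushes the red clique one lower, to $k+2$, which requires a more delicate base construction guaranteed only for blue clique $\Omega(\log^2 n)$ rather than $\Omega(n)$; the iteration again builds up to $\twr_{k-1}$, now carrying the weaker $\log^2 n$. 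Part (3) reaches $K^k_{k+1}$, the smallest clique for which the problem is nontrivial, and here one must pay the other currency: the iteration can no longer be primed from uniformity $3$, so one needs a genuinely new $4$-uniform base construction certifying $r_4(5,n)\ge 2^{cn^2}=\twr_2(cn^2)$, and then $k-4$ further step-ups (red clique $5\to k+1$, tower $\twr_2\to\twr_{k-2}$) deliver $\twr_{k-2}(cn^2)$ --- one level short of the conjectured $\twr_{k-1}$.

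The main obstacle, where essentially all the real work lies, is the third statement, and within it two tasks: (a) building the $4$-uniform construction certifying $r_4(5,n)\ge 2^{cn^2}$, namely a two-coloring of the edges of $K^4_N$ with $N=2^{cn^2}$ having no red $K^4_5$ and no blue $K^4_n$ --- forbidding a clique as small as $K^4_5$ lies far below what the classical stepping-up method can reach; and (b) proving a stepping-up lemma that drives the red clique down to $k+1$ while sacrificing only a single tower level rather than two or more. Establishing that squeezing the red clique down to the minimum costs no more than ``one tower level and a square'' is exactly the sticking point; the higher-uniformity base cases for parts (1) and (2) and the bookkeeping in the iterations are routine once the correct stepping-up statements are isolated.
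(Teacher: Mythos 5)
First, a point of reference: the paper does not prove Theorem \ref{main0} at all --- it is quoted from the authors' earlier work \cite{MS15} --- so there is no in-paper proof to match. The closest material here is the stepping-up machinery of Section \ref{clique} (Theorem \ref{stepup4} and Lemma \ref{stepup4a}), which does follow the philosophy you describe. Your outline correctly identifies that architecture: a low-uniformity base construction followed by iterated off-diagonal stepping-up in which the forbidden red clique grows by one per uniformity level, with the tower height and blue parameter tracked along the way; and your tower/clique bookkeeping for parts (1) and (3) is consistent with the statement. But what you have written is a roadmap, not a proof. By your own account, ``essentially all the real work'' lies in (a) the base construction certifying $r_4(5,n)\ge 2^{cn^2}$ and (b) a stepping-up lemma that adds one to the red clique while losing only an additive or constant amount in the blue parameter --- and you supply neither. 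These are precisely the new contributions of \cite{MS15} (and, in refined form, of Lemma \ref{stepup4a} here, where the coloring rule for non-monotone $\delta$-sequences via the local max/min pattern of $\delta_2,\delta_3$ and the ensuing case analysis \emph{are} the entire content). Asserting that such a rule ``can be designed'' is the claim to be proved, not a proof of it.

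Second, there is a concrete sign that the step-up lemma you posit cannot hold in the generality your sketch uses it. If a passage from $r_k(s,n)$ to $r_{k+1}(s+1,\Theta(n))$ with only additive blue loss were available whenever $s\ge k+2$, then part (2) would follow from the known and much stronger base $r_3(5,n)\ge 2^{cn}$ (a consequence of (\ref{r34t})) and would read $r_k(k+2,n)\ge \twr_{k-1}(cn)$ --- that is, Conjecture \ref{conj2} for $s=k+2$, which is open. The fact that part (2) claims only $\twr_{k-1}(c\log^2 n)$ tells you the cheap step-up must fail somewhere in the range you invoke it; in reality the passage from uniformity $3$ to $4$ for small red cliques costs an exponential in the blue parameter (compare Theorem \ref{stepup4}, where $n$ becomes $\lfloor\log n/2\rfloor$), and your sketch neither locates this loss nor explains how $\log^2 n$ emerges. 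Until you (i) exhibit the $4$-uniform construction behind $r_4(5,n)\ge 2^{cn^2}$, (ii) state and prove the exact step-up inequalities together with their applicability conditions (which red clique sizes relative to the uniformity, which starting uniformity, what blue loss), and (iii) redo the bookkeeping for part (2) with the correct losses, the theorem has not been established.
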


Implicit in work of Erd\H os and Hajnal~\cite{EH72} is the bound
$r_4(5,n) > 2^{cn}$ for some absolute positive constant $c$. While the authors~\cite{MS15} recently improved this to $2^{cn^2}$ above, there has been no superexponential lower bound given for this basic problem.  Here we provide such a lower bound.

\begin{theorem}\label{mainthm}
There is an absolute constant $c>0$ such that
$$r_4(5,n)> 2^{n^{c\log\log n}},$$
 and more generally for $k > 4$,
$$r_k(k + 1,n) > \twr_{k - 2}(n^{c\log \log n}).$$

\end{theorem}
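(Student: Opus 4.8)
The natural approach, and the one I would pursue, is to prove the bound for $k=4$ by an explicit construction and then bootstrap it to all $k>4$. For the bootstrapping, the stepping-up lemma behind Theorem~\ref{main0} converts a $j$-uniform $2$-colouring of $K^j_N$ with no red $K^j_{j+1}$ and no blue $K^j_m$ into a $(j+1)$-uniform $2$-colouring of $K^{j+1}_{2^N}$ with no red $K^{j+1}_{j+2}$ and no blue $K^{j+1}_{m'}$, where $m'$ and $m$ differ by at most a fixed multiplicative constant. Applying it $k-4$ times to a $4$-uniform construction on $2^{n^{c\log\log n}}$ vertices therefore yields a $k$-uniform construction on $\twr_{k-2}\bigl((n/C)^{c'\log\log n}\bigr)\ge\twr_{k-2}(n^{c''\log\log n})$ vertices avoiding red $K^k_{k+1}$ and blue $K^k_n$; the iterated logarithm $\log\log n$ is robust to the constant-factor changes in $m$. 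So the whole task is the case $k=4$: build a $2$-colouring of $K^4_N$ with no red $K^4_5$ and no blue $K^4_n$ and $N=2^{n^{c\log\log n}}$.

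For $k=4$ I would lift a near-optimal $3$-uniform construction one level up in the uniformity. Fix a $2$-colouring $\phi$ of $K^3_M$ with no red $K^3_4$ and no blue $K^3_t$; by the best known lower bound for the off-diagonal $3$-uniform clique Ramsey number (see Conlon, Fox and Sudakov~\cite{CFS}) one may take $M=2^{\Omega(t\log t)}$. Identify the vertex set of the target colouring $\Phi$ with $\{0,1\}^M$ in lexicographic order, and for vertices $v_1<\dots<v_5$ write $\delta_i=\delta(v_i,v_{i+1})$ for the coordinate of the first difference of $v_i$ and $v_{i+1}$. As in the Erd\H os--Hajnal stepping-up construction, I would colour each $4$-set $\{v_1,\dots,v_4\}$ using the order type of $(\delta_1,\delta_2,\delta_3)$ together with the $\phi$-colour of the triple of these coordinates, with a carefully chosen (and possibly randomly perturbed) convention on the degenerate $4$-sets. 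Two facts then have to be proved: (i) $\Phi$ has no red $K^4_5$ --- the local-extremum structure of the sequence $(\delta_1,\delta_2,\delta_3,\delta_4)$ attached to any five lex-ordered vertices forces that, if all five of their $4$-subsets were red, then four of the relevant $\delta$-coordinates would carry a red $K^3_4$ of $\phi$; and (ii) $\Phi$ has no blue $K^4_n$ for $n=2^{\Theta(t)}$ --- from a blue $K^4_n$ a dyadic pigeonhole on the first-difference coordinates pulls out $t$ vertices whose $\delta$-pattern is generic, so that the corresponding $t$ coordinates carry a blue $K^3_t$ of $\phi$. Combining (i) and (ii), $r_4(5,n)>2^M=2^{2^{\Omega(t\log t)}}$, and taking $t=\Theta(\log n)$ so that $2^{\Theta(t)}\ge n$ gives $r_4(5,n)>2^{2^{\Omega(\log n\cdot\log\log n)}}=2^{n^{c\log\log n}}$.

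I expect step (i)--(ii) with these precise parameters to be the main obstacle. A naive stepping up of ``no red $K^3_4$'' produces ``no red $K^4_q$'' only for some $q$ strictly larger than $5$, and for the final bound to lose merely the iterated logarithm $\log\log n$ --- rather than an honest power of $n$, or a whole tower level --- the red clique must end up at exactly $5$ while the blue parameter inflates by just a single exponential. Making both of these happen at once forces the stepping-up colouring rule, especially its behaviour on the degenerate $4$-sets, to be tuned against the structure of the $3$-uniform colouring $\phi$ being lifted, and this interplay, together with checking that the perturbed rule still controls the red cliques, is where the real work lies. Everything else --- the optimal choice of $t$ and the $k-4$ further stepping-up steps for $k>4$ --- is then bookkeeping.
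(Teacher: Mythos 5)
Your high-level strategy is exactly the paper's: step up a $3$-uniform coloring with no red $K^3_4$ and no blue $K^3_t$ on $2^{\Omega(t\log t)}$ vertices (the Conlon--Fox--Sudakov bound), take $t=\Theta(\log n)$ so the blue parameter exponentiates once, and then iterate a constant-loss stepping-up for $k>4$. But the two statements you defer --- that the $4$-uniform lift has no blue $K^4_n$ for $n=2^{\Theta(t)}$, and that a stepping-up exists which keeps the red clique at exactly $K^{j+2}_{j+1}$ while losing only a constant factor in the blue parameter --- are precisely the content of the paper (Theorem \ref{stepup4} and Lemma \ref{stepup4a}), and you give no argument for either. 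The second is not the ``stepping-up lemma behind Theorem \ref{main0}'' as you suggest: the paper explicitly notes that the known variant starting from $r_4(7,n)$ does not apply when one starts from $r_4(5,n)$, and a new coloring rule on non-monotone tuples (red iff $\delta_2$ is a local maximum and $\delta_3$ is a local minimum) has to be designed and verified.

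Your instinct about where the difficulty lies is also somewhat misplaced. The convention on degenerate ($\delta$-non-monotone) $4$-sets is not ``tuned against the structure of $\phi$'' and needs no random perturbation: the paper simply colors every non-monotone $4$-tuple blue. The red-$K^4_5$ claim is then immediate (all five $\delta$'s must be monotone, so Property C transfers a red $K^3_4$ to $\phi$). The real work is the blue side, and it is a purely combinatorial argument about the $\delta$-sequence, independent of $\phi$: from a putative blue $K^4_n$ one runs a halving procedure (in the spirit of the proof that $r_2(n,n)<4^n$), extracting $2t$ successive maxima $\delta_{i_1}>\cdots>\delta_{i_{2t}}$ each labeled white or black according to which side of the split survives; pigeonholing to $t$ of one label produces $t$ actual vertices whose consecutive $\delta$'s form a decreasing sequence equal to those extracted maxima, whence Property C yields a blue $K^3_t$ in $\phi$, a contradiction. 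Without this argument (or the analogous local-extremum analysis in Lemma \ref{stepup4a}), the claim that the blue clique number inflates by only a single exponential is unsupported, so the proposal has a genuine gap at its central step.
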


One of the building blocks we will use in our construction is the following lower bound of Conlon, Fox, and Sudakov \cite{CFS}: there is an absolute positive constant $c>0$ such that
\begin{equation} \label{r34t} r_3(4,t) > 2^{c \, t\log t}.\end{equation}
Our lower bound for $r_4(5,n)$ is proved via the following theorem.

\begin{theorem}\label{stepup4}
For $n$ sufficiently large, we have $$r_4(5, n) > 2^{r_3(4, \lfloor(\log n)/2\rfloor) - 1}.$$

\end{theorem}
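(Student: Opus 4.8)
The plan is to use a stepping-up construction: start from an optimal red/blue coloring of the triples of a set $[t]$ with no red $K^3_4$ and no blue $K^3_n$ (where $t = r_3(4,n')-1$ with $n' = \lfloor \log n/2\rfloor$ should be thought of more carefully — actually we want to produce a $4$-uniform coloring on $N = 2^{t}$ vertices), and lift it to a coloring of the $4$-subsets of $[N] = \{0,1\}^t$. First I would identify each vertex of the new $4$-graph with a binary string of length $t$, so $N = 2^t$ where $t = r_3(4,\lfloor\log n/2\rfloor)-1$. For two distinct strings $u,v$ write $\delta(u,v)$ for the largest coordinate where they differ; this gives the standard linear-like order and the usual ``measure of closeness'' used in Erd\H{o}s--Hajnal stepping-up. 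Given $4$ vertices $v_1 < v_2 < v_3 < v_4$, the key quantities are $\delta_i = \delta(v_i,v_{i+1})$ for $i=1,2,3$, and the pattern of local minima/maxima among $\delta_1,\delta_2,\delta_3$.

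**Defining the coloring and avoiding a red $K^4_5$.**

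Next I would define the color of a $4$-set from the $\delta_i$'s: when the sequence $\delta_1,\delta_2,\delta_3$ is monotone (or more precisely when there is a unique local extremum in a prescribed position), color the $4$-set using the red/blue color that the original $3$-graph coloring assigns to the triple $\{\delta_1,\delta_2,\delta_3\}$ (reading off the three ``difference coordinates'', which are distinct, as a triple in $[t]$); in the remaining ``bad'' configurations (where the $\delta_i$ do not form such a pattern), color red. The heart of the argument is then: (i) \emph{no red $K^4_5$}. Given $5$ vertices, look at the four consecutive differences $\delta_1,\delta_2,\delta_3,\delta_4$; a short case analysis of local extrema shows that among the $\binom{5}{4}=5$ four-subsets, the ones whose triple of differences lands in the ``good'' region cannot all be red, because that would force a red $K^3_4$ on the corresponding difference-coordinates in the original coloring; and the ``bad'' configurations can be shown to be incompatible with all five $4$-sets being bad simultaneously. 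This is exactly the standard stepping-up bookkeeping, and it is where I expect essentially all the work to be — one must check that the finitely many extremal patterns on $4$ consecutive $\delta$'s never conspire to make all five $4$-subsets red.

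**Avoiding a blue $K^4_n$.**

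Then (ii) \emph{no blue $K^4_n$}: suppose $W \subseteq \{0,1\}^t$ with $|W| = n$ spans only blue $4$-sets. Order $W$ as $w_1 < \dots < w_n$ and consider the consecutive differences $d_j = \delta(w_j, w_{j+1})$, $j = 1,\dots,n-1$. Because every $4$-set of $W$ is blue, no $4$-set can be in a ``bad'' (all-red) configuration, which forces strong monotonicity constraints on the sequence $d_1,\dots,d_{n-1}$ — in the usual stepping-up this means the $d_j$ contain a long monotone subsequence, of length about $\log n$, call it $d_{j_1}, d_{j_2},\ldots$ with all the $d_{j_\ell}$ distinct values in $[t]$. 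Restricting to the corresponding $\approx \log n / 2$ vertices and reading off their pairwise ``first-difference'' coordinates, the blueness of all $4$-sets translates into blueness of all triples among those $\lfloor \log n/2\rfloor$ difference-values in the original $3$-graph coloring — i.e. a blue $K^3_{\lfloor \log n/2\rfloor}$, contradicting $t = r_3(4,\lfloor\log n/2\rfloor)-1$. Hence $N = 2^{t} = 2^{r_3(4,\lfloor\log n/2\rfloor)-1}$ vertices can be colored with no red $K^4_5$ and no blue $K^4_n$, giving $r_4(5,n) > 2^{r_3(4,\lfloor\log n/2\rfloor)-1}$ as claimed. The main obstacle, as noted, is the case analysis in step (i)–(ii) ensuring the ``bad'' set is colored consistently so that it neither creates a red $K^4_5$ nor destroys the monotone-subsequence extraction needed for the blue bound; combining this with the Conlon--Fox--Sudakov bound \eqref{r34t} then yields the superexponential lower bound $r_4(5,n) > 2^{n^{c\log\log n}}$ stated in Theorem \ref{mainthm}, and iterating the same lift $k-4$ more times (each time using a trivial product/stepping-up step) gives the general-$k$ statement.
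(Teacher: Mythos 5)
Your general framework (binary encoding of the vertices, the function $\delta$, coloring monotone difference patterns by the $3$-uniform coloring $\phi$) matches the paper's, but there are two genuine problems. First, you color the non-monotone (``bad'') configurations \emph{red}; the paper colors them \emph{blue}, and the sign matters. With your choice the construction already contains a red $K_5^4$: take five vertices realizing the consecutive differences $(\delta_1,\delta_2,\delta_3,\delta_4)=(2,0,3,1)$, e.g.\ $a_1,\dots,a_5 = 0000, 0100, 0101, 1000, 1010$ in binary. Using Property B one checks that the difference triples of the five $4$-subsets are $(2,0,3)$, $(0,3,1)$, $(2,0,3)$, $(2,3,1)$, $(2,3,1)$ --- all non-monotone --- so all five $4$-sets are red. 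Coloring non-monotone patterns blue is precisely what makes the red case trivial (a red $K_5^4$ forces $\delta_1,\dots,\delta_4$ to be monotone, hence gives a red $K_4^3$ under $\phi$), at the cost of making the blue case the real work; your hedge about ``a unique local extremum in a prescribed position'' does not repair this, since the example above still defeats it for either choice of prescribed position unless one re-does the whole analysis.

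Second, your blue-$K_n^4$ argument has a gap exactly where the paper's new idea lives. Since non-monotone $4$-sets are blue by default (in the correct coloring), a blue clique imposes \emph{no} monotonicity constraint on $d_1,\dots,d_{n-1}$; and even granting a long monotone subsequence $d_{j_1},d_{j_2},\dots$, the vertices $a_{j_1},a_{j_2},\dots$ do not have these values as their consecutive differences: by Property B, $\delta(a_{j_\ell},a_{j_{\ell+1}})$ equals the \emph{maximum} of $d_m$ over the whole block $j_\ell\le m<j_{\ell+1}$, which need not be $d_{j_\ell}$. The paper instead runs a halving argument (in the spirit of the proof that $r_2(n,n)<4^n$): it repeatedly extracts the largest remaining $\delta$ from a nested sequence of vertex intervals $S_0\supset S_1\supset\cdots$ with $|S_{h+1}|\ge |S_h|/2$, labels each extracted $\delta$ white or black according to which side of it survives, and after $2t=\log n$ steps keeps the $t$ deltas of the majority label; these are then realized as a genuinely decreasing sequence of consecutive differences of $t+1$ actual vertices, yielding a blue $K_t^3$ under $\phi$ via Property C. This mechanism is what produces the $\lfloor\log n/2\rfloor$ in the statement (rather than something larger), and it is absent from your sketch.
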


\medskip

 \proof The idea is to apply a variant of the Erd\H os-Hajnal stepping up lemma (see \cite{graham}).  Set $t = \lfloor\frac{\log n}{2}\rfloor$.  Let $\phi$ be a red/blue coloring of the edges of the complete $3$-uniform hypergraph on the vertex set $\{0,1,\ldots, r_3\left(4,t\right) - 2\}$ without a red $K_{4}^{3}$ and without a blue $K_{t}^{3}$. We use $\phi$ to define a red/blue coloring $\chi$ of the edges of the complete $4$-uniform hypergraph $K_N^{4}$ on the vertex set $V=\{0,1,\ldots, N-1\}$ with $N= 2^{r_3(4, t) - 1}$, as follows.

For any $a \in V$, write $a=\sum_{i=0}^{r_3(4, t)-2}a(i)2^i$ with $a(i) \in \{0,1\}$ for each $i$. For $a \not = b$, let $\delta(a,b)$ denote the largest $i$ for which $a(i) \not = b(i)$.  Notice that we have the following stepping-up properties (again see \cite{graham})

\begin{description}

\item[Property A:] For every triple $a < b < c$, $\delta(a,b) \not = \delta(b,c)$ .

\item[Property B:] For $a_1 < \cdots < a_r$, $\delta(a_1,a_{r}) = \max_{1 \leq j \leq r-1}\delta(a_j,a_{j + 1})$.

\end{description}

Given any $4$-tuple $a_1< \cdots <a_4$ of $V$, consider the integers $\delta_i=\delta(a_i,a_{i+1}), 1\le i\le 3$. Say that $\delta_1,\delta_2,\delta_{3}$ forms a monotone sequence if $\delta_1 < \delta_2 < \delta_3$ or $\delta_1 > \delta_2 > \delta_3$. Now, define $\chi$ as follows:

\begin{equation}
 \chi(a_1, a_2, a_3, a_4) =\begin{cases}

\phi(\delta_1,\delta_2,\delta_{3}) \qquad \hbox{ if $\delta_1,\delta_2,\delta_{3}$ is monotone} \cr
blue \qquad \qquad \quad \hbox{\, if $\delta_1,\delta_2,\delta_{3}$ is not monotone} \notag
\end{cases}
\end{equation}

 Hence we have the following property which can be easily verified using Properties A and B (see \cite{graham}).

\begin{description}

\item[Property C:] For $a_1 < \cdots < a_r$, set $\delta_j = \delta(a_j,a_{j + 1})$ and suppose that $\delta_1,\ldots, \delta_{r-1}$ form a monotone sequence.  If $\chi$ colors every 4-tuple in $\{a_1,\ldots, a_r\}$ red (blue), then $\phi$ colors every triple in $\{\delta_1,\ldots, \delta_{r-1}\}$ red (blue).

\end{description}

For sake of contradiction, suppose that the coloring $\chi$ produces a red $K_5^{4}$ on vertices $a_1 < \cdots < a_5$, and let $\delta_i = \delta(a_i,a_{i + 1})$, $1 \leq i \leq 4$.  Then $\delta_1,\ldots, \delta_4$ form a monotone sequence and, by Property C, $\phi$ colors every triple in $\{\delta_1,\ldots, \delta_4\}$ red which is a contradiction.  Therefore, there is no red $K_5^{4}$ in coloring $\chi$.

Next we show that there is no blue $K_n^{4}$ in coloring $\chi$. Our argument is reminiscent of the standard argument for the bound $r_2(n,n) < 4^n$, though it must be adapted to this setting.   For sake of contradiction, suppose we have vertices $a_1,\ldots, a_n \in V$ such that $a_1 < \cdots < a_n$ and $\chi$ colors every $4$-tuple in the set $\{a_1, \ldots, a_n\}$ blue.  Let $\delta_i = \delta(a_i,a_{i + 1})$ for $1\leq i \leq n - 1$.  We greedily construct a set $D_h = \{\delta_{i_1},\ldots, \delta_{i_h}\}\subset \{\delta_1,\ldots, \delta_{n - 1}\}$ and a set $S_h \subset \{a_1,\ldots, a_n\}$ such that the following holds.

\begin{enumerate}
\item We have $\delta_{i_1} > \cdots > \delta_{i_h}$.

\item For each $\delta_{i_j}  = \delta(a_{i_j},a_{i_j + 1}) \in D_h =\{\delta_{i_1},\ldots, \delta_{i_h}\}$, consider the set of vertices $$A =  \{a_{i_{j + 1}},a_{i_{j + 1} + 1},\ldots, a_{i_{h}}, a_{i_h + 1}\}\cup S_h.$$  Then either every element in $A$ is greater than $a_{i_j}$ or every element in $A$ is less than $a_{i_j + 1}$.  In the former case we will label $\delta_{i_j}$ \emph{white}, in the latter case we label it \emph{black}.

\item The indices of the vertices in $S_h$ are consecutive, that is, $S_h = \{a_r,a_{r + 1},\ldots, a_{s-1},a_s\}$ for $1 \leq r < s \leq n$.

\end{enumerate}

We start with the $D_0 = \emptyset$ and $S_0 = \{a_1,\ldots, a_{n}\}$.  Having obtained $D_h = \{\delta_{i_1},\ldots, \delta_{i_h}\}$ and $S_h = \{a_r,\ldots, a_s\}$, $1\leq r < s \leq n$, we construct $D_{h  + 1}$ and $S_{h + 1}$ as follows.  Let $\delta_{i_{h + 1}}  = \delta(a_{\ell},a_{\ell + 1})$ be the unique largest element in $\{\delta_r,\delta_{r + 1},\ldots, \delta_{s-1}\}$, and set $D_{h + 1} = D_h\cup \delta_{i_{h + 1}}$.  The uniqueness of $\delta_{i_{h + 1}}$ follows from Properties A and B.  If $|\{a_r, a_{r+1},\ldots, a_{\ell}\}|\geq |S_h|/2$, then we set $S_{h  + 1} = \{a_r, a_{r+1},\ldots, a_{\ell}\}$.  Otherwise by the pigeonhole principle, we have $|\{a_{\ell + 1},a_{\ell + 2}, \ldots, a_s\}| \geq |S_h|/2$ and we set $S_{h + 1} = \{a_{\ell + 1},a_{\ell + 2}, \ldots, a_s\}$.

Since $|S_0| = n$, $t = \lfloor\frac{\log n}{2}\rfloor$ and $|S_{h  + 1}| \geq |S_{h}|/2$ for $h \geq 0$, we can construct $D_{2t} = \{\delta_{i_1},\ldots, \delta_{i_{2t}}\}$ with the desired properties.  By the pigeonhole principle, at least $t$ elements in $D_{2t}$ have the same label, say \emph{white}.  The other case will follow by a symmetric argument.  We remove all black labeled elements in $D_{2t}$, and let $\{\delta_{j_1},\ldots, \delta_{j_t}\}$ be the resulting set.  Now consider the vertices $a_{j_1}, a_{j_2}, \ldots, a_{j_{t}} \in V$.  By construction and by Property B, we have $a_{j_1} < a_{j_2} < \cdots < a_{j_{t}}$ and $\delta(a_{j_1},a_{j_2}) = \delta_{i_{j_1}}, \delta(a_{j_2},a_{j_3}) = \delta_{i_{j_2}}, \ldots, \delta(a_{j_{t }}, a_{j_{t +1}}) = \delta_{i_{j_{t}}} $.  Therefore we have a monotone sequence $$\delta(a_{j_1},a_{j_2}) > \delta(a_{j_2},a_{j_3}) > \cdots >  \delta(a_{j_{t }}, a_{j_{t +1}}).$$

By Property C, $\phi$ colors every triple from this set blue which is a contradiction.  Therefore there is no red $K_5^{4}$ and no blue $K_n^{4}$ in coloring $\chi$. \qed

Applying the lower bound in (\ref{r34t}), we obtain that
$$r_4(5,n) \geq 2^{r_3(4, \lfloor\log n/2\rfloor) - 1} > 2^{2^{c \log n \log\log n}}= 2^{n^{c \log\log n}}$$
 for some absolute positive constant $c$ and this establishes the first part of Theorem \ref{mainthm}.

  We next  prove Theorem \ref{mainthm} for $k\geq 5$.  Independently, Conlon, Fox and Sudakov~\cite{CFS15} gave a different proof of Theorem \ref{main0} part 1.  Their approach was to begin with a known  4-uniform construction that yields $r_4(7,n) > 2^{2^{cn}}$ and then use a variant of the stepping up lemma to give tower-type lower bounds for larger $k$. Unfortunately, this variant of the stepping up lemma does not work if one begins instead with a lower bound for
$r_4(5,n)$ which is our case.  However, a further variant of the approach does work, and this is what we do below.

\begin{lemma}\label{stepup4a}

For $k\geq 5$ and $n$ sufficiently large, we have

$$r_{k}(k +1,n) \geq 2^{r_{k-1}(k,\lfloor n/6\rfloor ) - 1}.$$

\end{lemma}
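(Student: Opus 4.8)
The plan is to imitate the stepping-up construction in the proof of Theorem~\ref{stepup4}, now stepping from a $(k-1)$-uniform colouring up to a $k$-uniform one, but to replace the crude rule ``non-monotone $\Rightarrow$ blue'' by a more economical rule that costs only a constant factor in the clique size instead of a logarithm. Set $t=\lfloor n/6\rfloor$ and let $\phi$ be a red/blue colouring of the $(k-1)$-subsets of $\{0,1,\dots,r_{k-1}(k,t)-2\}$ with no red $K^{k-1}_k$ and no blue $K^{k-1}_t$. On the vertex set $V=\{0,1,\dots,N-1\}$ with $N=2^{r_{k-1}(k,t)-1}$, using binary expansions, the function $\delta(a,b)$, and the stepping-up Properties~A and~B exactly as before, I would colour $K^k_N$ as follows: given $a_1<\cdots<a_k$ with $\delta_i=\delta(a_i,a_{i+1})$ for $1\le i\le k-1$, colour the $k$-tuple $\phi(\delta_1,\dots,\delta_{k-1})$ when $\delta_1,\dots,\delta_{k-1}$ is monotone, and otherwise assign a colour dictated by the pattern of local minima and maxima of the word $\delta_1,\dots,\delta_{k-1}$. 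The design goal of this last rule is that (i) a red $k$-tuple can occur \emph{only} in the monotone case, and (ii) in a blue clique the word of consecutive differences cannot oscillate too much.

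Granting such a rule, the ``no red $K^k_{k+1}$'' half should go through as in Theorem~\ref{stepup4}. If $a_1<\cdots<a_{k+1}$ spanned a red $K^k_{k+1}$ with $\delta_i=\delta(a_i,a_{i+1})$, then by (i) the delta words of the $k$-subsets $\{a_1,\dots,a_k\}$ and $\{a_2,\dots,a_{k+1}\}$ are both monotone, and since for $k\ge5$ they share at least three consecutive entries they are monotone in the same direction; hence $\delta_1,\dots,\delta_k$ is monotone, and the $k$-uniform analogue of Property~C produces a red $K^{k-1}_k$ under $\phi$, a contradiction.

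The heart of the matter, and the step I expect to be the main obstacle, is ``no blue $K^k_n$''. Suppose $a_1<\cdots<a_n$ span a blue $K^k_n$, with consecutive differences $\delta_1,\dots,\delta_{n-1}$. Property~(ii) should let one argue that the word $\delta_1,\dots,\delta_{n-1}$ (which has no two consecutive equal entries by Property~A) splits into at most six maximal monotone runs; by the pigeonhole principle one of them, say a decreasing run $\delta_p>\delta_{p+1}>\cdots>\delta_{p+\ell-1}$, has length $\ell\ge\lceil (n-1)/6\rceil\ge t$. Then the vertices $a_p<a_{p+1}<\cdots<a_{p+\ell}$ have strictly decreasing consecutive differences, so by Property~B every $k$-subset of them again lies in the monotone case; as all these $k$-tuples are blue, Property~C shows that $\phi$ colours every $(k-1)$-subset of $\{\delta_p,\dots,\delta_{p+\ell-1}\}$ blue, that is, $\phi$ contains a blue $K^{k-1}_\ell$ with $\ell\ge t$, contrary to the choice of $\phi$. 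This yields $r_k(k+1,n)>N=2^{r_{k-1}(k,\lfloor n/6\rfloor)-1}$, as required (the hypothesis that $n$ be large enough is used here and in the floor/ceiling estimates).

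All the real work therefore lies in designing the two-colour rule on non-monotone delta words and verifying (i) and (ii). This is precisely the point at which the classical Erd\H os-Hajnal stepping up would spend an extra colour, and circumventing that --- in the spirit of the Conlon-Fox-Sudakov variant mentioned above --- while keeping the loss in the clique size to a constant factor is where essentially all of the case analysis (and the constant $6$) lives. The case $k=4$ must be excluded here and treated separately in Theorem~\ref{stepup4} precisely because the delta word of a single $k$-tuple then has only three entries, too few patterns to pin down the oscillation of a blue clique's difference word to $O(1)$ runs.
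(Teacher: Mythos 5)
There is a genuine gap, and it sits exactly where you say ``all the real work lies'': the two design goals you impose on the colouring rule are mutually inconsistent, so the rule you need cannot exist in the form you describe. If (i) holds --- red $k$-tuples occur \emph{only} when $\delta_1,\dots,\delta_{k-1}$ is monotone --- then every non-monotone $k$-tuple is blue, and your rule collapses to exactly the rule of Theorem~\ref{stepup4}. Under that rule (ii) fails: a blue clique's difference word can oscillate essentially as much as it likes (e.g.\ a balanced ``binary-tree'' pattern of $\delta$'s with $\Theta(n)$ local extrema), because the only obstruction to blueness comes from $k$-subsets whose Property-B difference word happens to be monotone, and such monotone subsequences need only have logarithmic length. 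That is precisely why Theorem~\ref{stepup4} loses a logarithm rather than a constant factor. To get the constant-factor loss you must colour some \emph{non-monotone} $k$-tuples red; the paper's rule declares a non-monotone $k$-tuple red iff $\delta_2$ is a local maximum and $\delta_3$ is a local minimum. Then, inside a putative blue $K^k_n$ whose difference word has at least four local extrema, one takes the first local maximum $\delta_{j_1}$ and the following local minimum $\delta_{j_2}$ and forms the $k$-tuple $a_{j_1-1},a_{j_1},a_{j_2},a_{j_2+1},\dots,a_{j_2+k-3}$, whose difference word (via Property~B) realizes the forbidden max-then-min pattern in positions $2$ and $3$ and is therefore red --- a contradiction. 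This bounds the number of local extrema by three and yields the monotone run of length $\lfloor n/6\rfloor$, after which your endgame (Property~C/D giving a blue $K^{k-1}_{\lfloor n/6\rfloor}$ in $\phi$) is correct and matches the paper.

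Violating (i) has a cost that your sketch of the ``no red $K^k_{k+1}$'' half does not pay: since red tuples can now be non-monotone, a red $K^k_{k+1}$ on $a_1<\dots<a_{k+1}$ no longer forces both difference words $\delta_1,\dots,\delta_{k-1}$ and $\delta_2,\dots,\delta_k$ to be monotone, and one must run a short case analysis. If $\delta_1,\dots,\delta_{k-1}$ is monotone but $\delta_1,\dots,\delta_k$ is not, then $\delta_{k-1}$ is the unique local extremum of $\delta_2,\dots,\delta_k$, which (for $k\ge 5$) is not the max-then-min pattern in positions $2,3$, so $\chi(a_2,\dots,a_{k+1})$ is blue; and if $\delta_1,\dots,\delta_{k-1}$ is non-monotone and red, then $\delta_2$ is a local maximum and $\delta_3$ a local minimum of that word, which forces $\chi(a_2,\dots,a_{k+1})$ to be blue as well. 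Both the specific red pattern and this case analysis are the missing content of the lemma; without them the proposal is a restatement of what needs to be proved rather than a proof.
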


\proof Again we apply a variant of the stepping-up lemma.  Let $\phi$ be a red/blue coloring of the edges of the complete $(k-1)$-uniform hypergraph on the vertex set $\{0,1,\ldots, r_{k-1}(k ,\lfloor n/6\rfloor) - 2\}$ without a red $K_{k}^{k-1}$ and without a blue $K_{\lfloor n/6\rfloor}^{k-1}$. We use $\phi$ to define a red/blue coloring $\chi$ of the edges of the complete $k$-uniform hypergraph $K^k_N$ on the vertex set $V=\{0,1,\ldots, N - 1\}$ with $N= 2^{r_{k-1}(k,\lfloor n/6\rfloor) - 1}$, as follows.

Just as above, for any $a \in V$, write $a=\sum_{i=0}^{r_{k-1}(k,\lfloor n/6\rfloor)-2}a(i)2^i$ with $a(i) \in \{0,1\}$ for each $i$. For $a \not = b$, let $\delta(a,b)$ denote the largest $i$ for which $a(i) \not = b(i)$.  Hence Properties A and B hold.

Given any $k$-tuple $a_1<a_2<\ldots<a_{k}$ of $V$, consider the integers $\delta_i=\delta(a_i,a_{i+1}), 1\le i\le k-1$. We say that $\delta_i$ is a {\it local minimum} if $\delta_{i-1}>\delta_i<\delta_{i+1}$, a {\it local maximum} if $\delta_{i-1}<\delta_i>\delta_{i+1}$, and a {\it local extremum} if it is either a local minimum or a local maximum.  We say that $\delta_i$ is \emph{locally monotone} if $\delta_{i-1} < \delta_i < \delta_{i + 1}$ or $\delta_{i-1} > \delta_i > \delta_{i + 1}$. Since $\delta_{i-1} \not = \delta_i$ for every $i$, every nonmonotone sequence $\delta_1,\ldots,\delta_{k-1}$ has a local extremum.  If $\delta_1,\ldots,\delta_{k-1 }$ form a monotone sequence, then let $\chi(a_1,a_2,\ldots,a_{k})=\phi(\delta_1,\delta_2,\ldots,\delta_{k-1})$.  Otherwise if $\delta_1,\ldots,\delta_{k-1}$ is not monotone, then let $\chi(a_1,a_2,\ldots,a_{k})$ be red if and only if $\delta_2$ is a local maximum and $\delta_3$ is a local minimum.  Hence the following generalization of Property C holds.

\begin{description}

\item[Property D:] For $a_1 < \cdots < a_r$, set $\delta_j = \delta(a_j,a_{j + 1})$ and suppose that $\delta_1,\ldots, \delta_{r-1}$ form a monotone sequence.  If $\chi$ colors every $k$-tuple in $\{a_1,\ldots, a_r\}$ red (blue), then $\phi$ colors every $(k-1)$-tuple in $\{\delta_1,\ldots, \delta_{r-1}\}$ red (blue).

\end{description}

For sake of contradiction, suppose that the coloring $\chi$ produces a red $K_{k + 1}^k$ on vertices $a_1 < \cdots < a_{k + 1}$, and let $\delta_i = \delta(a_i,a_{i + 1})$, $1 \leq i \leq k$.  We have two cases.

\emph{Case 1.}  Suppose $\delta_1,\ldots, \delta_{k-1}$ is monotone.  Then if $\delta_2,\ldots, \delta_{k}$ is also a monotone sequence, $\phi$ colors every $(k-1)$-tuple in $\{\delta_1,\ldots, \delta_{k}\}$ red by Property D, which is a contradiction. Otherwise, $\delta_{k-1}$ is the only local extremum and $\chi(a_2,\ldots, a_{k + 1})$ is blue, which is again a contradiction.

\emph{Case 2.} Suppose $\delta_1,\ldots, \delta_{k-1}$ is not monotone.  Then we know that $\delta_2$ is a local maximum and $\delta_3$ is a local minimum.  However this implies that $\chi(a_2,\ldots, a_{k + 1})$ is blue, which is a contradiction.  Hence there is no red $K_{k + 1}^k$ in coloring $\chi$.

Next we show that there is no blue $K_n^k$ in coloring $\chi$.  For sake of contradiction, suppose we have vertices $a_1,\ldots, a_n \in V$ such that $a_1 < \cdots < a_n$ and $\chi$ colors every $k$-tuple blue, and let $\delta_i = \delta(a_i,a_{i + 1})$ for $1\leq i \leq n - 1$.  By Property D, there is no integer $r$ such that $\delta_r, \delta_{r + 1},\ldots, \delta_{r + \lfloor n/6\rfloor}$ is monotone, since this implies that $\phi$ colors every $(k-1)$-tuple in the set $\{\delta_r, \delta_{r + 1},\ldots, \delta_{r + \lfloor n/6\rfloor}\}$ blue which is a contradiction.  Therefore the sequence $\delta_1,\ldots, \delta_{n-1}$ contains at least four local extrema.  Let $\delta_{j_1}$ be the first local maximum, and let $\delta_{j_2}$ be the next local extremum, which must be a local minimum.  Recall that $\delta_{j_1} = \delta(a_{j_1},a_{j_1 + 1})$ and $\delta_{j_2} = \delta(a_{j_2},a_{j_2 + 1})$.  Consider the $k$ vertices $$a_{j_1 - 1}, a_{j_1}, a_{j_2}, a_{j_2 + 1},a_{j_2 + 2}, \ldots, a_{j_2 + k - 3}$$ and the sequence

$$\delta(a_{j_1 - 1}, a_{j_1}), \delta(a_{j_1}, a_{j_2}), \delta(a_{j_2}, a_{j_2 + 1}), \ldots , \delta( a_{j_2 + k - 4}, a_{j_2 + k - 3}).$$

By Property B we have $\delta(a_{j_1},a_{j_2}) = \delta_{j_1}$, and therefore $\delta(a_{j_1},a_{j_2})$ is a local maximum and $\delta(a_{j_2},a_{j_2 + 1})$ is a local minimum.  Therefore $\chi(a_{j_1 - 1}, a_{j_1}, a_{j_2}, a_{j_2 + 1}, \ldots, a_{j_2 + k - 3})$ is red and we have our contradiction.  Hence there is no blue $K_n^k$ in coloring $\chi$. \qed

By combining Theorem \ref{stepup4} with Lemma \ref{stepup4a}, we establish Theorem \ref{mainthm}.

\section{The Erd\H os-Rogers function for hypergraphs}\label{rogers}
An $s$-independent set in a $k$-graph $H$ is a vertex subset that contains no copy of $K_s^k$. So if $s=k$, then it is just an independent set.  Let $\alpha_s(H)$ denote the size of the largest $s$-independent set in $H$.

\begin{definition}
 For $k \le s < t < N$, the Erd\H os-Rogers function $f^k_{s,t}(N)$ is the minimum of $\alpha_s(H)$ taken over all $K_t^k$-free $k$-graphs $H$ of order
 $N$.
\end{definition}

 To prove the lower bound $f_{s,t}^{k}(N)\ge n$ one must show that every $K_{t}^{k}$-free $k$-graph of order $N$ contains an $s$-independent set with
 $n$ vertices. On the other hand, to prove the upper bound $f_{s,t}^{k}(N) < n$, one must construct a $K_{t}^{k}$-free $k$-graph $H$ of order $N$ with
 $\alpha_s(H) < n$.

The problem of determining $f_{s,t}^{k}(n)$ extends that of finding Ramsey numbers. Formally,
$$
r_k(s,n) = \min \{ N : f_{k,s}^{k}(N) \ge n\}.
$$

For $k=2$ the above function was first considered by Erd{\H o}s and Rogers~\cite{ERog} only for $t=s+1$, which might be viewed as the most restrictive case. Since then the function has been studied by several researchers culminating in the work of Wolfowitz~\cite{Wo} and Dudek, Retter and R\"odl~\cite{DRR} who proved the upper bound that follows (the lower bound is due to Dudek and the first author~\cite{DM}): for every $s\ge 3$ there are positive constants $c_1$ and $c_2(s)$ such that$$c_1\left(\frac{ N \log N }{\log\log N}\right)^{1/2}< f^2_{s,s+1}(N)< c_2 (\log N)^{4s^2}N^{1/2}.$$
 The problem of estimating the Erd\H os-Rogers function for $k>2$ appears to be much harder. Let us denote $$g(k,N)=f^k_{k+1, k+2}(N)$$ so that the above result (for $s=3$) becomes $g(2,N)=N^{1/2+o(1)}$.
   Dudek and the first author \cite{DM} proved that $(\log N)^{1/4+o(1)} < g(3,N) < O(\log N)$ and more generally that there are positive constants $c_1$ and $c_2$ with
\begin{equation} \label{1} c_1( \log_{(k-2)}N)^{1/4} < g(k,N) < c_2(\log N)^{1/(k-2)}\end{equation}
where $\log_{(i)}$ is the log function iterated $i$ times.
The exponent 1/4 was improved to 1/3 by Conlon, Fox, Sudakov \cite{CFS14}.
Both sets of authors asked whether the upper bound could be improved (presumably  to an iterated log function). Here we prove this where the number of iterations is $k-O(1)$. It remains an open problem to determine the correct number of iterations (which may well be $k-2$).

\begin{theorem}
Fix $k \ge 14$. Then $g(k,N) < O( \log_{(k-13)} N )$.
\end{theorem}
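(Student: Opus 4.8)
The plan is to build the desired $k$-graph $H$ on $N$ vertices by an iterated ``stepping-up'' construction, mirroring the philosophy of Section~\ref{clique} but in the Erd\H os--Rogers setting: each application of stepping up should cost one logarithm in the vertex count while preserving the two structural features we need, namely that $H$ is $K^k_{k+2}$-free and that every vertex subset of the target size $n = O(\log_{(k-13)} N)$ contains a copy of $K^k_{k+1}$. Concretely, I would first establish a base case for small uniformity — say a construction witnessing $g(j, M) < O(\log M)$ or $g(j,M) < O((\log M)^{1/(j-2)})$ for some fixed small $j$ (e.g. $j=13$ or $j=14$), which is exactly the bound already available from~\cite{DM} (inequality~\eqref{1}) or its refinement in~\cite{CFS14} — and then bootstrap upward in $k$ via a lemma of the shape: \emph{if there is a $K^{k-1}_{k+1}$-free $(k-1)$-graph $G$ on $M$ vertices with $\alpha_k(G) < m$, then there is a $K^k_{k+2}$-free $k$-graph $H$ on $N = 2^{\Theta(M)}$ vertices with $\alpha_{k+1}(H) < O(m)$.} Iterating this lemma $k - O(1)$ times, starting from the base case, turns a single logarithm at uniformity $13$ or $14$ into a $(k-13)$-fold iterated logarithm at uniformity $k$.

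The heart of the argument is the stepping-up lemma itself. Given the coloring/hypergraph $G$ on $\{0,1,\dots,M-1\}$, take $V = \{0,1,\dots,2^{M}-1\}$ and, for each $k$-set $a_1 < \cdots < a_k$, form the associated $\delta$-sequence $\delta_i = \delta(a_i, a_{i+1})$ using the binary-digit function exactly as in the proofs of Theorem~\ref{stepup4} and Lemma~\ref{stepup4a}, so that Properties~A and~B are available. I would then declare a $k$-set to be an edge of $H$ according to a rule that: (i) when $\delta_1, \dots, \delta_{k-1}$ is monotone, copies the membership of the corresponding $(k-1)$-set $\{\delta_1,\dots,\delta_{k-1}\}$ in $G$; and (ii) when the $\delta$-sequence is not monotone, uses the local-extremum pattern (as with $\delta_2$ a local max, $\delta_3$ a local min in Lemma~\ref{stepup4a}) to decide membership in a way that \emph{forbids} $K^k_{k+2}$. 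The $K^k_{k+2}$-freeness should follow from a case analysis on the $\delta$-sequence of a putative $(k+2)$-clique: either a long monotone run appears, forcing (via Property~B and rule (i)) a $K^{k-1}_{k+1}$ in $G$, contradicting $K^{k-1}_{k+1}$-freeness of $G$; or no such run appears, and then some induced sub-$(k+1)$-tuple has a non-monotone $\delta$-sequence whose local-extremum pattern is ruled out by rule (ii). For the independence-number bound, I would mimic the greedy/pigeonhole dyadic-splitting argument from the proof of Theorem~\ref{stepup4}: from $n$ vertices in $V$ spanning no $K^k_{k+1}$, extract via repeated halving and black/white labeling a set of $\Theta(\log n)$ indices whose $\delta$-values are strictly monotone, which by rule (i) and Property~C/D gives a $k$-independent set of size $\Theta(\log n)$ in $G$; since $\alpha_k(G) < m$, we get $\log n = O(m)$, i.e. $n = 2^{O(m)}$, which is precisely the relation $N = 2^{\Theta(M)}$ fed into the recursion.

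The main obstacle I anticipate is \emph{simultaneously} controlling both invariants through the non-monotone case of the stepping-up rule. It is easy to kill $K^k_{k+2}$ by coloring aggressively, and easy to keep small $\alpha_{k+1}(H)$ by coloring conservatively, but the local-extremum rule must thread the needle so that every sufficiently large vertex set still contains a $K^k_{k+1}$ \emph{and} no $(k+2)$-set is ever fully an edge-set. In particular one must check that the ``short'' cliques of size $k+1$ that we want to survive do not accidentally get blocked by the non-monotone rule, while every $(k+2)$-clique does — this is a delicate finite case check on $\delta$-sequences of length $k$ through $k+1$, analogous to but more intricate than Cases~1 and~2 in the proof of Lemma~\ref{stepup4a}, and it is presumably the reason the construction loses a constant number of logarithms (the ``$13$'') rather than landing at the conjectured $k-2$. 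A secondary technical point is managing the $O(1)$ losses in the independence size across $k - O(1)$ iterations so that they aggregate to a single multiplicative constant rather than degrading the iterated-log count; this should be routine once the per-step bound $\alpha_{k+1}(H) = O(\alpha_k(G))$ is pinned down with an explicit absolute constant.
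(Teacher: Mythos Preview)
Your high-level plan (iterate a stepping-up lemma from a base case at uniformity $14$, gaining one logarithm per step) and the shape of the per-step lemma you state are exactly right and match the paper. The genuine gap is in your proposed argument for the bound $\alpha_{k+1}(H) < O(\alpha_k(G))$. You suggest extracting, from an $n$-set with no $K_{k+1}^k$, a monotone $\delta$-sequence of length $\Theta(\log n)$ via the greedy dyadic-halving / black-white labeling argument of Theorem~\ref{stepup4}, and then invoking Property~D. But that argument can only ever produce a monotone run of length $\Theta(\log n)$, so what you actually obtain is $\log n \le O(\alpha_k(G))$, i.e.\ $\alpha_{k+1}(H) \le 2^{O(\alpha_k(G))}$. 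This is an exponential loss per step, not a multiplicative one; iterating it does \emph{not} give an iterated logarithm (your sentence ``$n=2^{O(m)}$, which is precisely the relation $N=2^{\Theta(M)}$'' conflates the vertex-count recursion with the independence-number recursion). In short, you have stated the correct target $\alpha_{k+1}(H)=O(\alpha_k(G))$ but proposed an argument that only yields $\alpha_{k+1}(H)=2^{O(\alpha_k(G))}$.

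The paper avoids this by a genuinely different mechanism. Its non-monotone rule is not the ``$\delta_2$ local max, $\delta_3$ local min'' rule of Lemma~\ref{stepup4a}; instead, a $k$-tuple with non-monotone $\delta$-sequence is declared an edge iff the number of local extrema is at least $k-4$ (equivalently, at most one $\delta_i$ is locally monotone). This is designed so that any fully ``zigzag'' $(k+1)$-tuple is a clique $K_{k+1}^k$ (every sub-$k$-tuple still has $\ge k-4$ extrema; this is the content of the case analysis following Claim~\ref{zigzag}). Consequently, an $n$-set with no $K_{k+1}^k$ cannot contain $k$ local extrema, hence by pigeonhole contains a monotone $\delta$-run of length $n/k$ (linear, not logarithmic), and Property~E then forces $n/k \le \alpha_k(G)$. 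The price is that ruling out $K_{k+2}^k$ under this edge rule is harder: it needs a separate combinatorial lemma (Lemma~\ref{l2}) about reducing the extremum count on $7$-tuples, and this is where the threshold $k\ge 15$ and ultimately the constant $13$ come from. Your proposal correctly anticipated that ``threading the needle'' in the non-monotone case is the crux, but the specific rule you borrowed from Lemma~\ref{stepup4a} does not thread it; you need the near-maximal-extrema rule together with the zigzag-clique argument to get the linear per-step bound.
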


\proof
We will proceed by induction on $k$.  The base case of $k = 14$ follows from the upper bound in (\ref{1}).  For the inductive step, let $k > 14$ and assume that the result holds for $k-1$.  We will show that
$$g(k,2^N)< k \cdot g(k-1, N),$$
and this recurrence  clearly implies the theorem.  Indeed, it easily implies the upper bound
$$g(k,N) < 2^k k! \log_{(k-13)}N$$ by induction on $k$, as $g(k+1, N)$ is at most
$$\begin{array}{ccl}
    g(k+1, 2^{\lceil \log N \rceil} ) & <  & (k+1)g(k, \lceil \log N \rceil) \\\\
      & < & 2^k(k+1)! \log_{(k-13)} \lceil \log N \rceil \\\\
      & \leq &  2^{k+1} (k+1)!\log_{(k-12)} N. \\
  \end{array}$$
Our strategy is to apply a variant of the stepping-up lemma.   Let us begin with a $K_{k+1}^{k-1}$-free $(k-1)$-graph $H'$ on $N$ vertices
for which $\alpha_{k}(H')=g(k-1, N)$. Note that this exists by definition of $g(k-1, N)$.
We will use $H'$  to produce a $K_{k+2}^k$-free $k$-graph $H$ on $2^N$ vertices
with $\alpha_{k+1}(H)< k \alpha_{k}(H')=kg(k-1, N)$.

Let $V(H')=\{0,1,\ldots, N-1\}$ and $V(H) = \{0,1,\ldots, 2^N-1\}$.  For any $a \in V(H)$, write $a=\sum_{i=0}^{N-1}a(i)2^i$ with $a(i) \in \{0,1\}$ for each $i$. For $a \not = b$, let $\delta(a,b)$ denote the largest $i$ for which $a(i) \not = b(i)$.   Therefore Properties A and B in the previous section hold.

Given any set of $s$ vertices $a_1<a_2<\ldots<a_{s}$ of $V(H)$, consider the integers $\delta_i=\delta(a_i,a_{i+1}), 1\le i\le s-1$.  For $e = (a_1,\ldots, a_s)$, let $m(e)$ denote the number of local extrema in the sequence $\delta_1,\ldots, \delta_{s-1}$.  In the case $s = k$, we define the edges of $H$ as follows.  If $\delta_1,\ldots,\delta_{k-1 }$ form a monotone sequence, then let $(a_1,a_2,\ldots,a_{k}) \in E(H)$ if and only if $(\delta_1,\delta_2,\ldots,\delta_{k-1}) \in E(H')$.  Otherwise if $\delta_1,\ldots,\delta_{k-1}$ is not monotone, then $(a_1,a_2,\ldots,a_{k}) \in E(H)$ if and only if $m(e) \in \{k-4, k-3\}$. In other words, given that $\delta_1,\ldots, \delta_{k-1}$ is not monotone, $(a_1,a_2,\ldots,a_{k}) \in E(H)$ if and only if
$\delta_1, \ldots, \delta_{k-1}$ has at most one locally monotone element.
 Note that we have the following variant of Property D.

\begin{description}

\item[Property E:] For $a_1 < \cdots < a_r$, set $\delta_j = \delta(a_j,a_{j + 1})$ and suppose that $\delta_1,\ldots, \delta_{r-1}$ form a monotone sequence.  If every $k$-tuple in $\{a_1,\ldots, a_r\}$ is in $E(H)$ (in $\overline{E}(H)$), then every $(k-1)$-tuple in $\{\delta_1,\ldots, \delta_{r-1}\}$ is in $E(H')$ (in $\overline{E}(H')$).

\end{description}

 We are to show that $H$ contains no $(k+2)$-clique and $\alpha_{k+1}(H)<k\alpha_k(H')$.   First let us establish the following lemma.

 \begin{lemma} \label{l2} Given $e = (a_1,\ldots,  a_7)$ with $a_1<\cdots < a_7$, let $\delta_i = \delta(a_i,a_{i + 1})$ for $1 \leq i \leq 6$.  If $m(e) = 4$, then there is an $a_i$ such that $2 \leq i \leq 6$ and  $m(e-a_i)=2$.
 \end{lemma}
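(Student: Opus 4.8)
The plan is to analyze the combinatorial structure of a sequence $\delta_1,\ldots,\delta_6$ with exactly four local extrema. First I would observe that since all consecutive $\delta_i$ are distinct (Property A), and there are six terms with four local extrema among the four interior positions $\delta_2,\ldots,\delta_5$, essentially all interior terms must be local extrema; in fact $m(e)=4$ forces $\delta_2,\delta_3,\delta_4,\delta_5$ to all be local extrema, so they alternate max/min/max/min or min/max/min/max. This means the sequence is ``maximally zig-zag'' in its interior, and in particular $\delta_1,\delta_2$ is a locally monotone-free pattern while $\delta_6$ is determined by the extremum type of $\delta_5$. I would write down the two symmetric shapes (up to reversal and up to the overall reflection $\delta \mapsto -\delta$) explicitly.

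The key step is then to delete the right interior vertex. Deleting $a_i$ for $2 \le i \le 6$ merges $\delta_{i-1}=\delta(a_{i-1},a_i)$ and $\delta_i=\delta(a_i,a_{i+1})$ into the single value $\delta(a_{i-1},a_{i+1}) = \max(\delta_{i-1},\delta_i)$ by Property B, leaving a sequence of length $5$ on $6$ vertices. I would check case by case which single deletion brings the extremum count down from $4$ to exactly $2$: intuitively, one wants to delete a vertex sitting ``at'' one of the two end-most extrema of the zig-zag so that merging flattens out one bump on each side, removing two extrema at once, rather than deleting a vertex in the middle (which would only remove extrema locally and might leave three). For instance if $\delta_2$ is a local max and $\delta_3,\delta_4,\delta_5$ continue the alternation, deleting $a_3$ replaces $\delta_2,\delta_3$ by $\max(\delta_2,\delta_3)=\delta_2$, and one verifies the new length-$5$ sequence $\delta_1, \delta_2, \delta_4, \delta_5, \delta_6$ has exactly two local extrema. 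The symmetric case (interior alternation starting with a local min at $\delta_2$, or the reflected shapes) is handled by deleting $a_5$ or the mirror vertex, using the same Property B merge identity; I would point out that it suffices to do one representative case since the reversal $a_i \mapsto a_{8-i}$ and the bit-complement symmetry reduce the others to it.

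The main obstacle I anticipate is bookkeeping: ``$m(e)=4$'' only constrains the interior, and one must be careful that a deletion does not accidentally \emph{create} a new local extremum at a position that was previously an endpoint (positions $1$ and $r-1$ are never local extrema by definition, so merging near the ends is exactly what helps), and that it genuinely lands on $2$ rather than on $0$, $1$, or $3$. So the real content is a short finite verification over the (very few) possible shapes of an interior-alternating length-$6$ sequence, using only Property A (distinctness, hence every interior position is a strict extremum or strictly locally monotone) and Property B (the merge rule for $\delta$ after deletion). I do not expect any step to require more than elementary case analysis once the ``interior must fully alternate'' observation is in place.
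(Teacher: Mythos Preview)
Your proposal is correct and follows essentially the same approach as the paper: both observe that $m(e)=4$ forces $\delta_2,\delta_3,\delta_4,\delta_5$ to alternate as extrema, then use Property~B to track how deleting an interior $a_i$ merges $\delta_{i-1},\delta_i$ into $\max(\delta_{i-1},\delta_i)$ and verify by a short case check that a suitable deletion yields exactly two extrema. Your choice of deletion (e.g.\ $a_3$ when $\delta_2$ is a local maximum) is in fact slightly cleaner than the paper's, which further splits on comparisons like $\delta_1\lessgtr\delta_3$ before selecting the vertex to remove.
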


\proof Suppose first that $\delta_2$ is a local minimum, so $\delta_1 > \delta_2 < \delta_3 > \cdots$. Then we have $m(e-a_4) = 2$.  Indeed, since $\delta_4$ is a local minimum, Property B implies $\delta(a_3,a_5) = \delta_3$.  If $\delta_5 > \delta_3$, then we have $\delta_2 < \delta(a_3,a_5) < \delta_5$ and therefore $m(e-a_4) = 2$.  If $\delta_5 < \delta_3$, then we have $\delta(a_3,a_5) > \delta_5 > \delta_6$ which again implies that $m(e-a_4) = 2$.

Now suppose that $\delta_2$ is a local maximum, so $\delta_1< \delta_2 > \delta_3 < \cdots$.  Then we have $m(e-a_3) = 2$.  Indeed, by Property B we have $\delta(a_2,a_4) = \delta_2$.  If $\delta_4 < \delta_2$, then we have $\delta(a_2,a_4) > \delta_4 > \delta_5$ which implies $m(e-a_3) = 2$.  If $\delta_4 > \delta_2$, then we have $\delta_1 < \delta(a_2,a_4) < \delta_4$ which again implies $m(e-a_3) = 2$.  \qed

For sake of contradiction, suppose there are $k+2$ vertices $a_1 < \cdots < a_{k + 2}$ that induce a $K_{k + 2}^k$ in $H$. Define $\delta_i=\delta(a_i, a_{i+1})$ for all $1 \le i \le k+1$.  Given the sequence $\delta_1,\delta_2,\ldots, \delta_{k + 1}$, let us consider the number of locally monotone elements in $D  = \{\delta_2,\ldots, \delta_k\}$.

\emph{Case 1.}  Suppose every element in $D$ is locally monotone.  Then $\delta_1,\ldots, \delta_{k + 1}$ form a monotone sequence.  By Property E, every $(k-1)$-tuple in the set $\{\delta_1,\ldots, \delta_{k + 1}\}$ is an edge in $H'$ which is a contradiction since $H'$ is $K_{k+1}^{k-1}$-free.

\emph{Case 2.}  Suppose there is at least one local extremum $\delta_{\ell}\in D$ and at least
two elements $\delta_{i},\delta_{j} \in D$ that are locally monotone. Then any $k$-tuple $e\subset\{a_1,\ldots, a_{k + 2}\}$ that includes the vertices $$a_{i - 1},a_{i},a_{i + 1}, a_{i + 2}, a_{j - 1},a_{j},a_{j + 1}, a_{j + 2},a_{\ell - 1},a_{\ell},a_{\ell + 1}, a_{\ell + 2}$$ satisfies $1 \leq m(e) < k -4$.  Therefore $e$ is not an edge in $H$ and we have a contradiction.

\emph{Case 3.}  Suppose there is exactly one element $\delta_{i} \in D$ that is locally monotone (and therefore at least one local extremum).  Since $k\geq 15$, either $|\{a_1,\ldots, a_{i - 1}\}| \geq 7$ or $|\{a_{i + 2},\ldots, a_{k + 2}\}| \geq 7$.  Let us only consider the former case, the latter being symmetric.  By Lemma \ref{l2}, there is an element $a_j \in \{a_2,\ldots, a_6\}\subset\{a_1,\ldots, a_{i - 1}\}$ such that for $e' = (a_1,\ldots, a_7)$, $m(e' - a_j) = 2$.  Then any $k$-tuple $e\subset \{a_1,\ldots, a_{k + 2}\}\setminus \{a_j\}$ that includes vertices
$$\{a_t: 1 \le t \le 7, t\ne j\}\cup \{ a_{i - 1}, a_{i},a_{i + 1},a_{i + 2}\}$$ satisfies $1 \leq m(e) < k - 4$.  Hence $e$ is not an edge in $H$ and we have a contradiction.

\emph{Case 4.}  Suppose every element in $D$ is a local extremum.  We then apply Lemma \ref{l2} to the set  $A = \{a_1,\ldots, a_7\}$ and $B = \{a_8,\ldots, a_{14}\}$ to obtain vertices $a_i \in A$ and $a_j \in B$ such that $m(\{a_1, \ldots, a_7\}\setminus\{a_i\})=2$ and $m(\{a_8, \ldots, a_{14}\}\setminus\{a_j\})=2$.
In particular, this implies that
for $e = \{a_1,\ldots, a_{k + 2}\} \setminus \{a_i,a_j\}$, the corresponding sequence of $\delta$'s has at least two locally monotone elements. Since clearly $e$ has at least one local extremum, we obtain $1\le m(e)<k-4$. Hence $e\not\in E(H)$ and we have a contradiction.

Therefore we have shown that $H$ is $K_{k + 2}^k$-free.

Our final task is to show that $\alpha_{k+1}(H)<k\alpha_k(H')$. Set $n = kt$ where $t = \alpha_k(H')$.  Let us assume for contradiction that there are vertices $a_1< \cdots< a_n$ that induce a $(k+1)$-independent set in $H$.  Let $\delta_i = \delta(a_i,a_{i+1})$ for $1\leq i \leq n-1$. If the sequence $\delta_1,\ldots, \delta_{n-1}$ contains fewer than $k$ local extrema, then there is a $j$ such that $\delta_j, \ldots, \delta_{j+t}$ is monotone.  Since $t = \alpha_k(H')$, the $t + 1$ vertices $\{\delta_j,\ldots,\delta_{j + t}\}$ contain a copy of $K^{k-1}_k$ in $H'$.  Say this copy is given by $\delta_{j_1},\ldots, \delta_{j_k}$.  Then by Property E, the vertices $a_{j_1} < \cdots < a_{j_k} < a_{j_k + 1}$ induce a copy of $K_{k + 1}^{k}$ which contradicts our assumption that $\{a_1,\ldots, a_n\}$ is a $(k + 1)$-independent set in $H$.

We may therefore assume that the sequence $\delta_1,\ldots ,\delta_{n-1}$ contains at least $k$ local extrema.  Now we make the following claim.

 \begin{claim}\label{zigzag}
 There is a set of $k + 1$ vertices $a^{\ast}_1,\ldots, a^{\ast}_{k + 1} \in \{a_1,\ldots, a_n\}$ such that for $\delta^{\ast}_i = \delta(a^{\ast}_i,a^{\ast}_{i + 1})$, the sequence $\delta^{\ast}_1,\ldots, \delta^{\ast}_k$ has $k-2$ local extrema.

 \end{claim}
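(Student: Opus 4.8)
The plan is to exploit Property~B: once we restrict to a subset of the $a_i$'s, each new $\delta^{\ast}$ becomes the maximum of the original $\delta$'s over a block of consecutive indices, and we can choose the blocks so that these block maxima form a perfect zig‑zag. Let $\mu_1<\mu_2<\cdots$ be the positions of the local minima of the sequence $\delta_1,\ldots,\delta_{n-1}$. Since consecutive local extrema have opposite type, between two consecutive local minima $\mu_j$ and $\mu_{j+1}$ there is exactly one local maximum, say at position $\nu_j$, and on $[\mu_j,\mu_{j+1}]$ the sequence strictly increases from $\delta_{\mu_j}$ to $\delta_{\nu_j}$ and then strictly decreases to $\delta_{\mu_{j+1}}$; in particular $\delta_{\nu_j}>\max\{\delta_{\mu_j},\delta_{\mu_{j+1}}\}$ and, by Property~B, $\delta(a_{\mu_j+1},a_{\mu_{j+1}})=\delta_{\nu_j}$, while trivially $\delta(a_{\mu_j},a_{\mu_j+1})=\delta_{\mu_j}$. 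Because the sequence has at least $k$ local extrema and these alternate in type, it has at least $s:=\lfloor k/2\rfloor$ local minima, so $\mu_1,\ldots,\mu_s$ and $\nu_1,\ldots,\nu_{s-1}$ are all defined (take the first $s$ local minima).

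I would then write down the $k+1$ vertices explicitly: put $a^{\ast}_1=a_{\mu_1-1}$ and, for $1\le j\le s$, $a^{\ast}_{2j}=a_{\mu_j}$ and $a^{\ast}_{2j+1}=a_{\mu_j+1}$; when $k$ is odd also append $a_{\mu_s+2}$ as the last vertex. Using $\mu_j<\nu_j<\mu_{j+1}$ one checks these are $k+1$ distinct elements of $\{a_1,\ldots,a_n\}$ in increasing order, all existing because $\mu_1\ge2$ and $\mu_s\le n-2$. Property~B now evaluates the consecutive differences: $\delta^{\ast}_1=\delta_{\mu_1-1}$, $\delta^{\ast}_{2j}=\delta_{\mu_j}$ for $1\le j\le s$, $\delta^{\ast}_{2j+1}=\delta_{\nu_j}$ for $1\le j\le s-1$, and the final term equals $\delta_{\mu_s}$ if $k$ is even and $\delta_{\mu_s+1}$ if $k$ is odd. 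Hence the interior entries $\delta^{\ast}_2,\delta^{\ast}_3,\ldots,\delta^{\ast}_{k-1}$ are $\delta_{\mu_1},\delta_{\nu_1},\delta_{\mu_2},\delta_{\nu_2},\ldots$, and they satisfy $\delta_{\mu_1}<\delta_{\nu_1}>\delta_{\mu_2}<\delta_{\nu_2}>\cdots$ because each $\delta_{\nu_j}$ exceeds both neighbours $\delta_{\mu_j}$ and $\delta_{\mu_{j+1}}$; combined with $\delta^{\ast}_1=\delta_{\mu_1-1}>\delta_{\mu_1}$ (as $\mu_1$ is a local minimum) and the analogous comparison at the right end, every one of $\delta^{\ast}_2,\ldots,\delta^{\ast}_{k-1}$ is a local extremum of $\delta^{\ast}_1,\ldots,\delta^{\ast}_k$, which is exactly the claim.

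The reason the naive approaches fail --- and the heart of the argument --- is that the local maxima of $\delta_1,\ldots,\delta_{n-1}$ can have wildly varying heights, so one cannot simply string consecutive extrema of the original sequence into a zig‑zag. The construction circumvents this: a one‑element block placed at a local minimum produces a guaranteed small $\delta^{\ast}$‑value, while collapsing an entire ``valley'' between two consecutive local minima into one block produces the value $\delta_{\nu_j}$, which is automatically larger than its two zig‑zag neighbours regardless of what the sequence does elsewhere. The only quantitative point to watch is that this uses about $k/2$ consecutive local minima and the hypothesis ``$\ge k$ local extrema'' supplies exactly $\lfloor k/2\rfloor$; the count is therefore tight, but it works because the two end terms $\delta^{\ast}_1,\delta^{\ast}_k$ need not be extrema, and that slack absorbs the minor parity split (the extra vertex $a_{\mu_s+2}$ when $k$ is odd).
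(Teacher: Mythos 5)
Your construction is correct and is essentially the paper's own argument: both take the pair $a_{\mu},a_{\mu+1}$ at each of roughly $k/2$ consecutive local minima so that, by Property B, the differences across the gaps equal the intervening local maxima, and both pad with one or two extra vertices to fix parity (the paper prepends $a_1$ or $a_1,a_2$ and cases on the type of the first extremum, while you pad with $a_{\mu_1-1}$ and $a_{\mu_s+2}$). The details you check — unimodality between consecutive minima, existence of the padding vertices, and the count $\lfloor k/2\rfloor$ of available minima — are all sound.
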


\begin{proof} Let $\delta_{i_1},\ldots, \delta_{i_k}$ be the first $k$ extrema in the sequence $\delta_1,\ldots, \delta_{n-1}$.

\emph{Case 1.}  Suppose $\delta_{i_1}$ is a local minimum.  If $k$ is odd, then consider the $k + 1$ distinct vertices

$$e = a_{i_1},a_{i_1 + 1},a_{i_3},a_{i_3 + 1},a_{i_5},a_{i_5 + 1},\ldots, a_{i_k},a_{i_k + 1}.$$

\noindent Note that the pairs $(a_{i_1},a_{i_1 + 1}),(a_{i_3},a_{i_3 + 1}),(a_{i_5},a_{i_5 + 1}),\ldots $ correspond to local minima.  By Property B, $\delta(a_{i_1 + 1},a_{i_3}) = \delta_{i_2}$, $\delta(a_{i_3 + 1},a_{i_5}) = \delta_{i_4}, \ldots$.  Since $\delta_{i_2}, \delta_{i_4},\delta_{i_6},\ldots$ were local maxima in the sequence $\delta_1,\ldots, \delta_{n-1}$, we have

$$ \delta(a_{i_1},a_{i_1 + 1}) < \delta(a_{i_1 + 1},a_{i_3}) > \delta(a_{i_3},a_{i_3 + 1}) < \delta(a_{i_3 + 1},a_{i_5}) > \cdots.$$  Hence the vertices in $e$ satisfy the claim.  If $k$ is even, then by the same argument as above, the $k+ 1$ vertices

$$a_{1}, a_{i_1},a_{i_1 + 1},a_{i_3},a_{i_3 + 1},a_{i_5},a_{i_5 + 1},\ldots, a_{i_{k-1}},a_{i_{k-1} + 1}$$

\noindent satisfy the claim.

\emph{Case 2.}  Suppose $\delta_{i_1}$ is a local maximum.  If $k$ is odd, then the arguments above imply that the set of $k+1$ vertices

$$a_1,a_2,a_{i_2},a_{i_2 + 1},a_{i_4},a_{i_4 + 1},\ldots, a_{i_{k-1}}, a_{i_{k-1} + 1}$$

\noindent satisfies the claim.  Likewise, if $k$ is even, the set of $k+ 1$ vertices

$$a_1,a_{i_2},a_{i_2 + 1},a_{i_4},a_{i_4 + 1},\ldots, a_{i_{k}}, a_{i_{k} + 1}$$
\noindent satisfies the claim. \end{proof}

By Claim \ref{zigzag}, we obtain $k+1$ vertices $h = (a^{\ast}_1,\ldots, a^{\ast}_{k + 1})$ along with $\delta^{\ast}_1,\ldots , \delta^{\ast}_{k}$ with the desired properties.  Consider the $k$-tuple $e = h - a^{\ast}_i$.  If $i = 1$ or $k + 1$, then it is easy to see that $m(e) = k-3$, which implies $e \in E(H)$.  For $i = 2$, $\delta^{\ast}_3$ is the only possible locally monotone element in the sequence $\delta(a^{\ast}_1,a^{\ast}_3), \delta^{\ast}_3,\ldots, \delta^{\ast}_k$.  Therefore $m(e-a_i) \geq k-4$ and $e\in E(H)$.  A symmetric argument for the case $i = k$ implies that $e \in E(H)$.  Therefore we can assume $3\leq i \leq k-1$.  By Property B, we have $\delta(a^{\ast}_{i-1},a^{\ast}_{i + 1}) = \max\{\delta^{\ast}_{i-1},\delta^{\ast}_i\}$.  Let us consider the two cases.

\emph{Case 1.}  Suppose $\delta(a^{\ast}_{i-1},a^{\ast}_{i + 1}) = \delta^{\ast}_{i-1}$.  If $\delta^{\ast}_{i + 1} > \delta^{\ast}_{i - 1}$, then $\delta^{\ast}_{i-1}$ is the only element in the sequence $\delta^{\ast}_1,\ldots, \delta^{\ast}_{i-1},\delta^{\ast}_{i + 1},\ldots, \delta^{\ast}_k$ that is locally monotone.  Hence $m(e) = k-4$ and $e \in E(H)$.  If $\delta^{\ast}_{i + 1} < \delta^{\ast}_{i-1}$, then $\delta^{\ast}_{i+1}$ is the only possible element in the sequence $\delta^{\ast}_1,\ldots, \delta^{\ast}_{i-1},\delta^{\ast}_{i + 1},\ldots, \delta^{\ast}_k$ that is locally monotone.  More precisely, if $i = k-1$ then $m(e) = k-3$, and if $3 \leq i < k-1$ then $m(e) = k-4$.  Hence $m(e) \geq k-4$ and therefore $e \in E(H)$.

\emph{Case 2.}  Suppose $\delta(a^{\ast}_{i-1},a^{\ast}_{i + 1}) = \delta^{\ast}_{i}$.  If $\delta^{\ast}_{i-2} > \delta^{\ast}_{i}$, then $\delta^{\ast}_{i}$ is the only element in the sequence $\delta^{\ast}_{1},\ldots, \delta^{\ast}_{i-2},\delta^{\ast}_{i},\ldots, \delta^{\ast}_{k}$ that is locally monotone.  Hence $m(e) = k-4$ and $e \in E(H)$.  If $\delta^{\ast}_{i-2} < \delta^{\ast}_{i}$,  then $\delta^{\ast}_{i-2}$ is the only possible element in the sequence $\delta^{\ast}_{1},\ldots, \delta^{\ast}_{i-2},\delta^{\ast}_{i},\ldots, \delta^{\ast}_{k}$ that is locally monotone.  More precisely, if $i = 3$ then $m(e) = k-3$, and if $3 < i \leq k-1$ then $m(e) = k-4$.  Hence $m(e) \geq k-4$ and $e\in E(H)$.

Therefore every $k$-tuple $e = h-a_i$ is an edge in $H$, and the $k + 1$ vertices $h$ induces a $K_{k + 1}^k$ in $H$.  This is a contradiction and we have completed the proof.\qed

\section{Ramsey numbers for $k$-half-graphs versus cliques}\label{last}

Let $K_4^{3}\setminus e$ denote the 3-uniform hypergraph on four vertices, obtained by removing one edge from $K_4^{3}$.  A simple argument of Erd\H os and Hajnal \cite{EH72}  implies $r(K_4^{3}\setminus e,K_n^{3}) < (n!)^2$.    On the other hand, they also gave a construction that shows
$r(K_4^{3}\setminus e,K_n^{3}) > 2^{cn}$ for some constant $c>0$. Improving either of these bounds is a very interesting open problem, as $K_4^{3}\setminus e$ is, in some sense, the smallest 3-uniform hypergraph whose Ramsey number with a clique is at least exponential.

A \emph{$k$-half-graph}, denote by $B^k$, is a $k$-uniform hypergraph on $2k-2$ vertices, whose vertex set is of the form $S\cup T$, where $|S| = |T| = k-1$, and whose edges are all $k$-subsets that contain $S$, and one $k$-subset that contains $T$.  The hypergraph $B^k$ can be viewed as a generalization of $K_4^{3}\setminus e$ as $B^{3}=K_4^{3}\setminus e$.

The goal of this section is to obtain upper and lower bounds for $r(B^k,K^k_n)$ that parallel the known state of affairs for $K_4^{3}\setminus e$.
We begin by presenting a straightforward generalization of the argument of Erd\H os and Hajnal to establish an upper bound for Ramsey numbers for $k$-half-graphs versus cliques. Again for simplicity we write $r(B^k,K^k_n)  = r_k(B,n)$.

\begin{theorem}\label{halfup}
For $k\geq 4$, we have $ r_k(B,n) \leq (n!)^{k-1}.$

\end{theorem}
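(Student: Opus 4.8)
The plan is to mimic the original Erd\H os--Hajnal argument for $r(K_4^3\setminus e, K_n^3)<(n!)^2$ and push the bookkeeping through the extra $k-3$ uniformity levels. I would argue by induction on $n$, showing that a red/blue coloring $\chi$ of $K^k_N$ with $N=(n!)^{k-1}$ and no blue $K^k_n$ must contain a red copy of $B^k$. The key structural feature of $B^k$ is that it is "almost a link star": once you fix a $(k-1)$-set $S$ all of whose supersets (inside some vertex set) are red, you only need \emph{one} extra red edge through a disjoint $(k-1)$-set $T$ to complete $B^k$. So the strategy is to build, greedily, a decreasing nested sequence of vertex sets $V=V_0\supseteq V_1\supseteq\cdots$ together with vertices $v_1,\dots,v_{k-1}$ such that at stage $i$ the partial $(i)$-set $\{v_1,\dots,v_i\}$ has the property that \emph{many} $k$-sets extending it inside $V_i$ are red; the size control $|V_{i+1}|\ge |V_i|/(\text{something like }n)$ is exactly what forces the $(n!)^{k-1}$ bound once $i$ runs up to $k-1$.

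In more detail: first I would handle the base of the induction and the trivial blue case. Then, given the coloring on $V_0$ with $|V_0|=(n!)^{k-1}$, I pick an arbitrary vertex $v_1$ and look at the "link" $k$-graph on $V_0\setminus\{v_1\}$ consisting of those $(k-1)$-sets $e$ with $\{v_1\}\cup e$ red. If this link has no blue $K^{k-1}_{?}$-type obstruction one recurses; the cleaner route, following Erd\H os--Hajnal, is: among the $N-1$ other vertices, either some vertex is in $\ge (N-1)/(\text{?})$ red edges through $v_1$ — pass to that neighborhood and decrement a counter — or else one can extract a blue clique of the forbidden size. Iterating this $k-1$ times produces a $(k-1)$-set $S=\{v_1,\dots,v_{k-1}\}$ and a residual set $W$ with $|W|$ still large (at least $n-1$, say, because we only divide by roughly $n$ at each of the $k-1$ steps and we started from $(n!)^{k-1}\ge$ enough) such that \emph{every} $k$-set $S\cup\{w\}$, $w\in W$, is red. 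Now $W$ by itself is either a blue $K^k_n$ (done, contradiction) or contains a red $k$-edge $T'\subseteq W$; choosing $T$ to be any $(k-1)$-subset of that red edge gives the required single red edge through $T$, and $S\cup T'$ (with $|T'|=k$, $T\subset T'$) together with all the red sets $S\cup\{w\}$, $w\in T\setminus S$, realizes a red $B^k$.

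The main obstacle is getting the size accounting exactly right so the final residual set $W$ has at least $n$ vertices — i.e. verifying that the "divide by (roughly) $n!/(n-1)!$'' happening at each of the $k-1$ extraction steps really costs only a factor that multiplies to at most $(n!)^{k-1}/n$, and that at each step the dichotomy "large red-neighborhood vs.\ forbidden blue clique'' is stated with the correct threshold so that the blue clique one extracts is genuinely of order $n$ (not something smaller that doesn't contradict anything). This is where one must be careful about which uniformity the intermediate "cliques'' live in and about off-by-one issues in the nested link construction; everything else is routine pigeonhole. Once the thresholds are pinned down, the induction closes immediately and yields $r_k(B,n)\le (n!)^{k-1}$.
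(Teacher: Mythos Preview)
Your endgame has a genuine gap. Once you have the $(k-1)$-set $S$ and a residual set $W$ with every $S\cup\{w\}$ red, you look for a red $k$-edge $T'\subset W$ and claim this completes a red $B^k$. But by definition $B^k$ lives on two disjoint $(k-1)$-sets $S,T$, and the single ``extra'' edge must be $T\cup\{s\}$ for some $s\in S$; your red edge $T'=T\cup\{w\}$ has its extra vertex $w\in W$, not in $S$, so no red $B^k$ is produced on $S\cup T'$. The paper repairs exactly this point: fix $u\in S$ and ask whether some $(k-1)$-set $T\subset W$ has $\{u\}\cup T$ red. If so, $S\cup T$ is a red $B^k$ (now the extra edge $T\cup\{u\}$ genuinely uses a vertex of $S$). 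If not, every $k$-edge from $u$ into $W$ is blue, and one applies the \emph{induction hypothesis} to $W$ --- this is why one needs $|W|\ge((n-1)!)^{k-1}$, not merely $|W|\ge n$ --- to obtain either a red $B^k$ inside $W$ or a blue $K^k_{n-1}\subset W$; adjoining $u$ to the latter gives the blue $K^k_n$. Your sketch never invokes the induction hypothesis in the endgame, and without it the argument cannot close.

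A secondary issue: the plan to build $S$ one vertex at a time with a ``large red-neighborhood vs.\ forbidden blue clique'' dichotomy at each step does not work as stated, because at intermediate stages $i<k-1$ there is no blue-clique alternative available (a blue set in the link of $\{v_1,\dots,v_i\}$ says nothing about edges avoiding $\{v_1,\dots,v_i\}$). The paper instead handles the sparse case globally in one shot via Spencer's independent-set lemma (few red edges $\Rightarrow$ blue $K^k_n$), and in the dense case obtains the entire $(k-1)$-set $S$ with $|N(S)|\ge((n-1)!)^{k-1}$ by a single averaging over all $(k-1)$-subsets. Iterated averaging could also produce such an $S$, but the surrounding narrative about passing to neighborhoods and extracting blue cliques along the way should be dropped.
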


\noindent First, let us recall an old lemma due to Spencer.

\begin{lemma}[\cite{S}]\label{spencer}
Let $H = (V,E)$ be a $k$-uniform hypergraph on $N$ vertices.  If $|E(H)| > N/k$, then there exists a subset $S\subset V(H)$ such that $S$ is an independent set and

$$|S| \geq \left(1 - \frac{1}{k}\right)N \left(\frac{N}{k|E(H)|}\right)^{\frac{1}{k-1}}.$$

\end{lemma}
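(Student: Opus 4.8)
The plan is to prove Spencer's lemma by the standard probabilistic deletion argument, optimizing the retention probability.

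\textbf{Setup.} Let $H = (V,E)$ have $N$ vertices and $|E| > N/k$ edges. Pick each vertex of $V$ independently with probability $p \in [0,1]$ to form a random subset $W \subseteq V$. Let $X = |W|$ be the number of chosen vertices and let $Y$ be the number of edges of $H$ entirely contained in $W$. Then $\EE X = pN$ and $\EE Y = p^k |E|$, so by linearity of expectation
$$\EE[X - Y] = pN - p^k|E|.$$
From the random set $W$, delete one vertex from each edge contained in $W$; the resulting set $S$ is independent and $|S| \ge X - Y$. Hence there exists a choice of $W$ with
$$|S| \ge pN - p^k|E|.$$

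\textbf{Optimizing $p$.} Treat $f(p) = pN - p^k|E|$ as a function on $[0,1]$ and maximize. Setting $f'(p) = N - kp^{k-1}|E| = 0$ gives $p_0 = \left(\frac{N}{k|E|}\right)^{1/(k-1)}$. The hypothesis $|E| > N/k$ guarantees $\frac{N}{k|E|} < 1$, so $p_0 \in (0,1)$ is a legitimate probability. Substituting back,
$$f(p_0) = p_0 N - p_0^k |E| = p_0 N - p_0 \cdot p_0^{k-1}|E| = p_0 N - p_0 \cdot \frac{N}{k} = p_0 N\left(1 - \frac{1}{k}\right),$$
using $p_0^{k-1}|E| = N/k$. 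This is exactly the claimed bound
$$|S| \ge \left(1 - \frac{1}{k}\right) N \left(\frac{N}{k|E(H)|}\right)^{\frac{1}{k-1}}.$$

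\textbf{Remarks on the obstacle.} There is no serious obstacle here; the only point requiring care is verifying that the optimizing $p_0$ lies in $[0,1]$, which is precisely where the hypothesis $|E| > N/k$ is used — this is why the lemma is stated with that assumption rather than for all hypergraphs. One should also note the minor subtlety that $X - Y$ can be negative for a given outcome, but since its expectation is $f(p_0) > 0$, some outcome achieves at least this value, and for that outcome the deletion process yields an independent set of the required size. An alternative presentation avoids calculus by simply declaring $p = \left(\frac{N}{k|E|}\right)^{1/(k-1)}$ at the outset, observing $p \le 1$, and computing $\EE[X-Y]$ directly; I would likely use this cleaner version in the final writeup.
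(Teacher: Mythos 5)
Your proof is correct and complete: the deletion method with the optimized retention probability $p_0 = (N/(k|E|))^{1/(k-1)}$ yields exactly the stated bound, and you correctly identify that the hypothesis $|E(H)| > N/k$ is what makes $p_0$ a legitimate probability. Note that the paper does not prove this lemma at all --- it is quoted from Spencer's 1972 paper --- and your argument is the standard one for this result, so there is nothing to compare beyond confirming it works.
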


\medskip

\noindent \emph{Proof of Theorem \ref{halfup}.}  We proceed by induction on $n$.  The base case $n = k$ is trivial.  Let $n > k$ and assume the statement holds for $n'  < n$.  Let $k\geq 4$ and let $\chi$ be a red/blue coloring on the edges of $K^k_N$, where $N = (n!)^{k-1}$.  Let $E_R$ denote the set of red edges in $K^k_N$.

\medskip

\noindent \emph{Case 1:} Suppose $|E_R|\leq N/k$.  Then one can delete $N/k$ vertices from $H$ and obtain a blue clique of size $(1 - 1/k)N \geq n$.

\medskip

\noindent \emph{Case 2}:  Suppose $N/k < |E_R| < \frac{\left(1 - \frac{1}{k}\right)^{k-1}N^k}{ k n^{k-1}  }.$ Then by Lemma \ref{spencer}, $K^k_N$ contains a blue clique of size $n$.

\medskip

\noindent \emph{Case 3}:  Suppose $|E_R| \geq \frac{\left(1 - \frac{1}{k}\right)^{k-1}N^k}{ k n^{k-1}  }.$  Then by averaging, there is a $(k-1)$-element subset $S\subset V$ such that $N(S) = \{v \in V: S\cup \{v\} \in E_R\}$ satisfies

$$|N(S)| \geq \frac{\left(1 - \frac{1}{k}\right)^{k-1}N^k}{ n^{k-1}{N\choose k-1} }  \geq \left((n-1)!\right)^{k-1} .$$

\noindent  The last inequality follows from the fact that $k\geq 4$.  Fix a vertex $u \in S$.  If $\{u\}\cup T \in E_R$ for some $T\subset N(S)$ such that $|T| = k-1$, then $S\cup T$ forms a red $B^k$ and we are done.  Therefore we can assume otherwise.  By the induction hypothesis, $N(S)$ contains a red copy of $B^k$, or a blue copy of $K^k_{n-1}$.  We are done in the former case, and in the latter case, we can form a blue $K_{n}^k$ by adding the vertex $u$. $\hfill\square$

\medskip

We now move to our main new contribution, which are constructions which show that $r_k(B,n)$ is at least exponential in $n$.

\begin{theorem}\label{halflow}
For fixed $k\geq 3$, we have $ r_k(B,n) > 2^{\Omega(n)}.$

\end{theorem}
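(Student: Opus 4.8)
The plan is to construct, for each $k \geq 3$ and each $n$, a red/blue coloring of $K^k_N$ with $N = 2^{cn}$ that avoids a red $B^k$ and a blue $K^k_n$. Since $B^3 = K_4^3 \setminus e$ and Erd\H os--Hajnal already supply such a construction for $k=3$ (as cited in the excerpt), the natural approach is induction on $k$, using a stepping-up argument analogous to Lemma~\ref{stepup4a}: from a good coloring $\phi$ of $K^{k-1}_M$ on vertex set $\{0,1,\ldots,M-1\}$ avoiding red $B^{k-1}$ and blue $K^{k-1}_n$, build a coloring $\chi$ of $K^k_{2^M}$ on $\{0,1,\ldots,2^M-1\}$ using the binary-expansion function $\delta(a,b)$ and Properties A and B. The base case $k=3$ is the known construction, so we get $r_k(B,n) \geq 2^{r_{k-1}(B,n)-1}$, and since $r_3(B,n) > 2^{\Omega(n)}$ the claimed bound follows (indeed with room to spare, giving a tower-type bound, so the $2^{\Omega(n)}$ target is comfortable).

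The core of the argument is choosing the stepping-up rule for $\chi$ on a monotone-vs-nonmonotone dichotomy and checking it kills red $B^k$. As in the proofs of Theorem~\ref{stepup4} and Lemma~\ref{stepup4a}: for a $k$-tuple $a_1 < \cdots < a_k$ with $\delta_i = \delta(a_i,a_{i+1})$, if $\delta_1,\ldots,\delta_{k-1}$ is monotone set $\chi(a_1,\ldots,a_k) = \phi(\delta_1,\ldots,\delta_{k-1})$, and otherwise use a fixed rule based on the location of the first local extremum (e.g.\ color red iff $\delta_2$ is a local maximum and $\delta_3$ a local minimum, matching Lemma~\ref{stepup4a}). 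The key observation — the analogue of Properties C/D — is that if every $k$-tuple in a set $\{a_1,\ldots,a_r\}$ whose $\delta$-sequence is monotone receives the same color, then $\phi$ assigns that color to every $(k-1)$-tuple of the corresponding $\delta$'s. Suppose $\chi$ had a red $B^k$ on a vertex set $S \cup T$, $|S| = |T| = k-1$; listing all $2k-2$ vertices in increasing order, I would argue that a red $B^k$ forces the relevant $\delta$-subsequences to be monotone, so Property~D transfers it down to a red $B^{k-1}$ (or a forbidden small nonmonotone pattern) in $\phi$, a contradiction. The point is that $B^k$, having a full "star" $S$ plus one extra edge, is rigid enough that its presence in $\chi$ forces monotonicity along enough consecutive pairs that the down-projection is again a $B^{k-1}$; this is where the definition of $B^k$ (rather than $K^k_{k+1}$) must be used carefully.

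For the blue side, I would mimic the no-blue-clique arguments in Section~\ref{clique}: assume $a_1 < \cdots < a_n$ with every $k$-tuple blue, set $\delta_i = \delta(a_i,a_{i+1})$, and note by Property~D that no window $\delta_r,\ldots,\delta_{r+\ell}$ of length comparable to $n$ can be monotone (else $\phi$ gets a blue $K^{k-1}_n$). One then shows the $\delta$-sequence must contain a local extremum pattern of the type that $\chi$ colors red, contradicting that all $k$-tuples are blue. Actually, because here the blue target is still a clique $K^k_n$ (not a $B^k$), this step is essentially identical to the corresponding step in the proof of Lemma~\ref{stepup4a} and needs only the crude bound that a long enough all-blue clique forces a long monotone $\delta$-run.

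The main obstacle I anticipate is the red-$B^k$ analysis: unlike a clique, $B^k$ is not symmetric, so when its $2k-2$ vertices are linearly ordered by the integers of $V(H)$ one must carefully track which $(k-1)$-subsets are edges and verify that the "one extra edge" together with the star still forces the $\delta$-sequence to be monotone on the right consecutive blocks so that Property~D applies and yields a red $B^{k-1}$ downstairs. A secondary concern is making the stepping-up rule for nonmonotone tuples consistent so that it never accidentally creates a red $B^k$ among nonmonotone $k$-tuples; choosing the rule to depend only on the first two local extrema (as in Lemma~\ref{stepup4a}) should handle this, but the bookkeeping for $B^k$ is more delicate than for $K^k_{k+1}$.
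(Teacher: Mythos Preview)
Your stepping-up plan cannot work: if it did, iterating the recurrence $r_k(B,n)\ge 2^{r_{k-1}(B,\,cn)-1}$ from the base $r_3(B,n)>2^{\Omega(n)}$ would yield a tower-type bound $r_k(B,n)\ge\twr_{k-1}(\Omega(n))$, and already at $k=4$ this contradicts Theorem~\ref{halfup}, which gives $r_4(B,n)\le(n!)^3=2^{O(n\log n)}$. The step that fails is precisely the one you flagged as the main obstacle, the red-$B^k$ analysis. A red $B^k$ has only $k$ edges on $2k-2$ vertices and is far too sparse to force anything forbidden downstairs. Concretely, for the $3\to4$ step with the ``nonmonotone $=$ blue'' rule of Theorem~\ref{stepup4}: whenever $\phi$ contains red triples $\{p,q,r\}$ and $\{r,s,t\}$ with $p>q>r>s>t$, six vertices $a_1<\cdots<a_6$ with consecutive $\delta$-values $p,q,r,s,t$ carry a red $B^4$ (take $S=\{a_1,a_2,a_3\}$, $T=\{a_4,a_5,a_6\}$, extra edge $\{a_3\}\cup T$; every edge has monotone $\delta$-sequence projecting to $\{p,q,r\}$ or $\{r,s,t\}$). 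And any $\phi$ with no blue $K^3_n$ on more than $2n$ vertices has such a chained pair of red triples, since otherwise both the set $M$ of minima of red triples and its complement are red-free and hence each of size below $n$. No alternative rule for nonmonotone tuples can rescue the induction, as Theorem~\ref{halfup} forbids any tower-type lower bound.

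The paper's proof takes an entirely different route, with no induction on $k$ and no stepping-up. It gives a direct one-shot probabilistic construction on $N=2^{cn}$ vertices, with separate constructions for $k$ odd and $k$ even. For odd $k$, take a uniformly random tournament on $[N]$ and declare a $k$-tuple red iff it induces a regular tournament; for even $k$, take a uniformly random map $\binom{[N]}{2}\to\{1,\ldots,k-1\}$ and declare a $k$-tuple red iff the induced edge-coloring is proper (each color class a perfect matching). In both cases a red $B^k$ is ruled out \emph{deterministically} by a short degree argument (all arcs, respectively colors, from a fixed $u\in S$ into $T$ are forced to coincide), while the expected number of blue $K^k_n$ is pushed below $1$ using a partial Steiner $(n,k,2)$-system to obtain $\Omega(n^2)$ mutually independent ``this $k$-tuple is blue'' events inside any $n$-set.
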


\begin{proof} Surprisingly, we require different arguments for $k$ even and $k$ odd.

\medskip

\noindent \emph{The case when $k$ is odd.}  Assume $k$ is odd, and set $N = 2^{cn}$ where $c = c(k)$ will be determined later.  Then let $T$ be a random tournament on the vertex set $ [N]$, that is, for $i,j \in [N]$, independently, either $(i,j) \in E$ or $(j,i) \in E$, where each of the two choices is equally likely.  Then let $\chi:{[N]\choose k} \rightarrow \{\textnormal{red},\textnormal{blue}\}$ be a red/blue coloring on the $k$-subsets of $[N]$, where $\chi(v_1,\ldots, v_k) =$ red if $v_1,\ldots, v_k$ induces a \emph{regular} tournament, that is, the indegree of every vertex is $(k-1)/2$ (and hence the outdegree of every vertex is $(k-1)/2$).  Otherwise we color it blue.  We note that since $k$ is odd, a regular tournament on $k$ vertices is possible by the fact that $K_{k}$ has an Eulerian circuit, and then by directing the edges according to the circuit we obtain a regular tournament.

Notice that the coloring $\chi$ does not contain a red $B^k$.  Indeed, let $S,T \subset [N]$ such that $|S| = |T| = k-1$, $S\cap T = \emptyset$, and every $k$-tuple of the form $S\cup \{v\}$ is red, for all $v \in T$.  Then for any $u \in S$, all edges in the set $u\times T$ must have the same direction, either all emanating out of $u$ or all directed towards $u$.  Therefore it is impossible for $u\cup T$ to have color red, for any choice $u \in S$.

Next we estimate the expected number of monochromatic blue copies of $K^k_n$ in $\chi$.  For a given $k$-tuple $v_1,\ldots,v_k \in [N]$, the probability that $\chi(v_1,\ldots,v_k) = \textnormal{blue}$ is clearly at most $1 - 1/2^{{k\choose 2}}$.  Let $T = \{v_1,\ldots, v_n\}$ be a set of $t$ vertices in $[n]$, where $v_1 < \cdots < v_n$.  Let $S$ be a partial Steiner $(n,k,2)$-system with vertex set $T$, that is, $S$ is a $k$-uniform hypergraph such that each $2$-element set of vertices is contained in at most one edge in $S$.  Moreover, $S$ satisfies $|S| = c'n^{2}$ where $c'  = c'(k)$.  It is known that such a system exists. Then the probability that every $k$-tuple in $T$ has color blue is at most the probability that every $k$-tuple in $S$ is blue.  Since the edges in $S$ are independent, that is no two edges have more than one vertex in common, the probability that $T$ is a monochromatic blue clique is at most $\left(1 - 1/2^{{k\choose 2}}\right)^{|S|} \leq \left(1 - 1/2^{{k\choose 2}}\right)^{c'n^{2}}$.  Therefore the expected number of monochromatic blue copies of $K^k_n$ in $\chi$ is at most

$${N\choose n}\left(1 - 1/2^{{k\choose 2}}\right)^{c'n^{2}} < 1,$$

\noindent for an appropriate choice for $c = c(k)$.  Hence, there is a coloring $\chi$ with no red $B^k$ and no blue $K^k_n$.  Therefore

$$r_k(B,n) > 2^{cn }.$$

\medskip

\noindent \emph{The case when $k$ is even.}   Assume $k$ is even and set $N = 2^{cn}$ where $c = c(k)$ will be determined later.  Consider the coloring $\phi:{[N]\choose 2} \rightarrow \{1,\ldots, k-1\}$, where each edge has probability $1/(k-1)$ of being a particular color independent of all other edges (pairs).  Using $\phi$, we define the coloring $\chi:{[N]\choose k}\rightarrow \{\textnormal{red},\textnormal{blue}\}$, where the $k$-tuple $(v_1,\ldots, v_k)$ is red if $\phi$ is a proper edge-coloring on all pairs among $\{v_1,\ldots, v_k\}$, that is, each of the $k-1$ colors appears as a perfect matching.  Otherwise we color it blue.

Notice that the coloring $\chi$ does not contain a red $B^k$.  Indeed let $S,T \subset [N]$ such that $|S| = |T| = k-1$ and $S\cap T = \emptyset$.  If, for all $v \in T$, the $k$-tuples of the form $S\cup \{v\}$ are red, then the set of edges $\{u\}\times T$ is monochromatic with respect to $\phi$ for any $u\in S$.  Hence, $\chi$ could not have colored $\{u\}\cup T$ red for any $u \in S$.

For a given $k$-tuple $v_1,\ldots,v_k \in [N]$, the probability that $\chi(v_1,\ldots,v_k) = \textnormal{blue}$ is at most $1 - (1/(k-1))^{{k\choose 2}}$.  By the same argument as above, the expected number of monochromatic blue copies of $K^k_n$ with respect to $\chi$ is less than 1 for an appropriate choice of $c = c(k)$.  Hence, there is a coloring $\chi$ with no red $B^k$ and no blue $K^k_n$.  Therefore

$$r_k(B,n) > 2^{cn}$$
and the proof is complete.
\end{proof}

{\bf Acknowledgment.} We thank the referee for helpful comments.

\end{document}